\newtheorem*{maintheorem*}{Main Theorem}
\newtheorem{theorem}{Theorem}[section]
\newtheorem{prop}[theorem]{Proposition}
\newtheorem{lemma}[theorem]{Lemma}
\newtheorem{cor}[theorem]{Corollary}
\theoremstyle{definition}
\newtheorem{definition}[theorem]{Definition}
\numberwithin{equation}{section}
\newcommand{\defn}[1]{{\color{blue!50!black}\emph{#1}}}
\newcommand\mydots{\hbox to 1em{.\hss.\hss.}}
\newcommand{\bubcov}{\lessdot_{\mathsf{bub}}}
\newcommand{\bubleq}{\leq_{\mathsf{bub}}}
\newcommand{\shufleq}{\leq_{\mathsf{shuf}}}
\newcommand{\sindel}{\hookrightarrow}
\newcommand{\transpose}{\Rightarrow}
\newcommand{\Poset}{\mathbf{P}}
\newcommand{\Qoset}{\mathbf{Q}}
\newcommand{\indeg}{\mathsf{in}}
\newcommand{\indeld}{\mathsf{in_{\sindel}}}
\newcommand{\transd}{\mathsf{in_{\transpose}}}
\DeclareMathOperator{\rk}{rk} %rank
\newcommand{\ShufPoset}{\textbf{\textsf{Shuf}}}
\newcommand{\Bub}{\textbf{\textsf{Bub}}}
\newcommand{\ch}{\widetilde{\operatorname{ch}}}
\def\u{\mathbf{u}}\def\v{\mathbf{v}}\def\w{\mathbf{w}}\def\x{\mathbf{x}}\def\y{\mathbf{y}}
\def\rd{\textcolor{red}}
\def\bl{\textcolor{blue}}
\newcommand{\defs}{\overset{\mathsf{def}}{=}}
\keywords{shuffle word, shuffle lattice, bubble lattice, characteristic polynomial, $M$-triangle, $H$-triangle}
\begin{document}

	\mbox{}
	\title{On the Bivariate Characteristic Polynomial of the Shuffle Lattice}
	\author{Annabel Ma} \address{Department of Mathematics, Harvard University, Cambridge, MA 02138} \email{annabelma@college.harvard.edu}

\maketitle

\begin{abstract}
The shuffle lattice was introduced by Greene in 1988 as an idealized model for DNA mutation, when he revealed remarkable combinatorial properties of this structure. In this paper, we prove an explicit formula for the $M$-triangle of the shuffle lattice, a bivariate refinement of the characteristic polynomial, as conjectured by McConville and M\"uhle in 2022, and find a relation between the $M$-triangle and the $H$-triangle, a bivariate refinement of the rank generating function. 
\end{abstract}

\bigskip
%%%%%%%%%%%
%%%%%%%%%%%

\section{Introduction}

Let $\x = x_1\dots x_m$ and $\y = y_1\dots y_n$ be words of length $m$ and $n,$ respectively, where all $m+n$ letters are distinct. Consider all ways to transform $\x$ into $\y$ using a mutation called an \defn{indel} $\u\rightarrow \v,$ where the word $\v$ is reached by deleting some $x_i$ or inserting some $y_j$ into $\u.$ Each intermediate word that appears between $\x$ and $\y$ via some sequence of indel mutations is called a \defn{shuffle word.} The \defn{shuffle lattice} is the poset of intermediate words, where the order relation is defined by the indel operation. 

In 1988, Greene introduced the shuffle poset as an idealized model for DNA mutation, when he discovered several relationships between different combinatorial invariants such as its characteristic polynomial, its zeta polynomial, and its rank generating function~\cite{greene1988posets}. Since then, shuffle lattices have been studied extensively by Doran in 1994~\cite{DORAN1994118}, by Simion and Stanley in 1999~\cite{simion1999flag}, by Hersh in 2002~\cite{hersh2002two}, and by M\"uhle in 2022~\cite{MUHLE2022103521}. 

Then, in 2022, McConville and M\"uhle introduced bivariate refinements of the characteristic polynomial and the rank generating function of the shuffle lattice, called the \defn{$M$-triangle} and the \defn{$H$-triangle}, respectively~\cite{bubblelattices2}. The authors proved a formula for the $H$-triangle and conjectured a formula for the $M$-triangle, which we prove in this paper. 
\begin{restatable}{theorem}{mexplicit}\label{thm:m-explicit-formula}
    For $m,n\geq 0$, the $M$-triangle of the shuffle lattice $\ShufPoset(m,n)$ is given by
\begin{align*}
		M_{m,n}(q,t) %& = (1-t)^{m+n}\sum_{a\geq 0} \binom{m}{a}\binom{n}{a}\left(\frac{t(q-1)}{1-t}\right)^a\left(\frac{t(q-1)}{1-t}\frac{q}{q-1}+1\right)^{m+n-2a}\\
		& = \sum_{a\geq 0} \binom{m}{a}\binom{n}{a}t^{a}(1-t)^{a}(q-1)^{a}(qt-t+1)^{m+n-2a}.
	\end{align*}
\end{restatable}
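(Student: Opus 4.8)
The plan is to evaluate the $M$-triangle directly from its definition,
\[
  M_{m,n}(q,t)\;=\;\sum_{u\shufleq v}\mu(u,v)\,q^{\rk(u)}\,t^{\rk(v)},
\]
and to reorganize the sum letter by letter. Recall that the rank of a shuffle word is the number of its deleted $x$-letters plus the number of its inserted $y$-letters, so $q^{\rk(u)}t^{\rk(v)}$ splits into $m+n$ local monomials, one per letter, recording whether that letter lies in both of $u,v$, in neither, or is deleted (for an $x$) or inserted (for a $y$) on the way from $u$ to $v$. I would combine this with the known description of the Möbius function of an interval $[u,v]$ of the shuffle lattice: $[u,v]$ decomposes into maximal ``blocks'' of mutually entangled deletions and insertions at an $x$--$y$ interface, and $\mu(u,v)$ is the product over these blocks of the Möbius numbers $(-1)^{m_s+n_s}\binom{m_s+n_s}{m_s}$ of the corresponding shuffle lattices $\ShufPoset(m_s,n_s)$. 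The double sum then becomes a sum over combinatorial types of pairs $u\shufleq v$, each weighted by its per-letter monomials and its product of block Möbius numbers.

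Next I would resum. For a letter that enters no entangled block, summing its local monomial over the three possible fates, weighted by the local Möbius sign, gives the factor $qt-t+1$ in every case; running over all free letters accounts for $(qt-t+1)^{m+n-2a}$. For the entangled blocks the bookkeeping is heavier: one must sum over the block shapes $\ShufPoset(m_s,n_s)$, over the ways the surrounding letters attach to a block, and over the internal configurations, weighting by the binomial Möbius numbers, and show that the total reorganizes into a product of ``bubble'' contributions, each equal to $t(1-t)(q-1)$. A bubble pairs one $x$-letter with one $y$-letter, and because the $x$-letters (respectively $y$-letters) of a shuffle word occur in increasing index order, this pairing is forced to be monotone; hence specifying $a$ bubbles is exactly choosing which $a$ of the $m$ $x$-letters and which $a$ of the $n$ $y$-letters they use, which produces $\binom{m}{a}\binom{n}{a}$. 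Assembling the free-letter and bubble factors yields the claimed formula.

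The crux — and the step I expect to be hardest — is precisely this entangled-block resummation: the per-bubble factor $t(1-t)(q-1)=qt-t-qt^{2}+t^{2}$ is visibly more than the $2t^{2}$ contributed by a single fully formed bubble (whose Möbius number is $(-1)^{1+1}\binom{2}{1}=2$), so the binomial Möbius numbers of the larger blocks must, after summing over all configurations, cancel down to exactly the bubble generating functions. I would manage this either by a sign-reversing involution on the non-bubble contributions or by computing the block generating functions directly and invoking the Vandermonde identity $\sum_{k}\binom{m}{k}\binom{n}{k}=\binom{m+n}{m}$. Several facts serve as checks: the theorem is equivalent to the rational generating function identity
\[
  \sum_{m,n\ge 0}M_{m,n}(q,t)\,x^{m}y^{n}=\frac{1}{(1-cx)(1-cy)-dxy},
\]
where $c=qt-t+1$ and $d=t(1-t)(q-1)$ (equivalently the recursion $M_{m,n}=c\,(M_{m-1,n}+M_{m,n-1})-(c^{2}-d)\,M_{m-1,n-1}$ with $M_{m,0}=M_{0,m}=c^{m}$), which may be easier to verify than the closed form; the coefficient of $q^{0}$ in $M_{m,n}(q,t)$ must reproduce Greene's characteristic polynomial of the shuffle lattice; and, granting the companion relation between the $M$- and $H$-triangles, the theorem also follows from the $H$-triangle formula of McConville and M\"uhle.
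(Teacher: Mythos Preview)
Your proposal is not a proof but an outline, and the step you yourself flag as the crux is left undone. You correctly observe that a single fully-formed $\ShufPoset(1,1)$ block contributes $2t^{2}$ while the desired per-bubble weight is $t(1-t)(q-1)=qt-t-qt^{2}+t^{2}$, so the identity can only hold after a nontrivial resummation mixing blocks of different sizes with the ``free letter'' contributions. Neither of your suggested mechanisms (a sign-reversing involution, or a direct block computation plus Vandermonde) is carried out, and there is a structural obstacle you do not address: whether a given letter is ``free'' is not an intrinsic property of that letter but depends on the entire configuration of $(u,v)$ (an $x$-deletion is isolated only if no $y$-insertion lands in the slot it vacates, which is determined globally). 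So the per-letter factorization $(qt-t+1)^{m+n-2a}$ does not arise from an independent sum over fates; it is really the outcome of a cluster-type expansion whose justification is the whole content of the theorem. Until that is written down, the argument has a genuine gap.

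The paper avoids this combinatorics entirely. Instead of summing over arbitrary intervals $[u,v]$ weighted by $\mu(u,v)$, it uses the identity $M_{\Poset}(q,t)=\sum_{u}(qt)^{\rk(u)}\ch_{[u,\hat 1]}(t)$, so only intervals up to $\hat 1=\y$ appear; these intervals are shown to factor as products $\prod_i \ShufPoset(\eta_i,\lambda_i)$, and the known closed form for $\ch_{m,n}$ then yields an explicit (if messy) expression for $M_{m,n}$. The remaining work is pure generating-function manipulation: one computes $\sum_{m,n}M_{m,n}(q,t)x^{m}y^{n}$ and shows it equals the rational function $\bigl[(1-cx)(1-cy)-dxy\bigr]^{-1}$ with $c=qt-t+1$, $d=t(1-t)(q-1)$, and separately checks that the conjectured closed form has the same bivariate generating function. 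In other words, the rational-function identity you list at the end as a ``check'' that ``may be easier to verify than the closed form'' is not a check but the actual proof; your proposed direct combinatorial route, if it can be made to work, would be a genuinely different and more conceptual argument, but as written it is not complete.
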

To prove this theorem, we find the rational function representation of the series $\sum_{m \geq 0} \sum_{n \geq 0} x^my^nM_{m,n}(q,t),$ which we state in the following theorem:
\begin{restatable}{theorem}{mtrianglerational}\label{thm:m-rational-function}
    We have 
    \begin{displaymath}
        \sum_{m, n \geq 0} x^my^nM_{m,n}(q,t) = \frac{1}{(1-x(qt - t + 1))(1-y(qt - t +1)) - t(1-t)(q-1)xy}.
    \end{displaymath}
\end{restatable}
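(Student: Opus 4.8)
The plan is to prove Theorem~\ref{thm:m-rational-function} as the analytic heart of the paper, and then to deduce the explicit formula of Theorem~\ref{thm:m-explicit-formula} by extracting coefficients. The starting point is whatever recursive or structural description of $M_{m,n}(q,t)$ is available for the shuffle lattice $\ShufPoset(m,n)$: the $M$-triangle is built from the M\"obius function of intervals of the shuffle lattice, and the key input should be a recursion relating $M_{m,n}$ to $M_{m-1,n}$, $M_{m,n-1}$, and $M_{m-1,n-1}$ coming from the symmetry of $\ShufPoset(m,n)$ in its two alphabets and from the local structure of cover relations (deleting an $x$-letter versus inserting a $y$-letter). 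Greene's work and the McConville--M\"uhle paper furnish exactly such structural recursions for shuffle-lattice invariants, so I would first isolate a clean bivariate recurrence for $M_{m,n}(q,t)$ with the appropriate initial conditions $M_{0,0} = 1$, $M_{m,0}(q,t) = M_{0,n}(q,t) = (qt-t+1)^{\,\cdot}$ (the one-alphabet shuffle lattice is a Boolean-type lattice whose characteristic polynomial is elementary).

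Next I would translate that recurrence into a functional equation for the generating function $F(x,y) := \sum_{m,n\geq 0} x^m y^n M_{m,n}(q,t)$. A recurrence of the schematic shape $M_{m,n} = \alpha\, M_{m-1,n} + \alpha\, M_{m,n-1} + \beta\, M_{m-1,n-1} + (\text{correction terms})$ becomes, after multiplying by $x^m y^n$ and summing, a relation of the form $F(x,y)\bigl(1 - \alpha x - \alpha y - \beta xy\bigr) = (\text{boundary contributions in } x \text{ and } y \text{ alone})$, and the boundary contributions must collapse — after using the one-alphabet formula and resumming the geometric-type series — to the constant $1$. With $\alpha = qt-t+1$ and $\beta$ chosen to match $-t(1-t)(q-1)$, this yields exactly
\begin{displaymath}
F(x,y) = \frac{1}{(1 - x(qt-t+1))(1 - y(qt-t+1)) - t(1-t)(q-1)xy},
\end{displaymath}
since $(1-\alpha x)(1-\alpha y) = 1 - \alpha x - \alpha y + \alpha^2 xy$, so the denominator is $1 - \alpha x - \alpha y + (\alpha^2 - t(1-t)(q-1))xy$; I would need the combinatorial recurrence to produce precisely the coefficient $\alpha^2 - t(1-t)(q-1)$ on the $xy$ term. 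An alternative, if a clean recurrence is not readily at hand, is to go the other direction: take the claimed closed form for $M_{m,n}(q,t)$ from Theorem~\ref{thm:m-explicit-formula} as a definition-by-formula, verify it satisfies the M\"obius-theoretic recursion of the shuffle lattice, and separately sum the double series in closed form; but presenting Theorem~\ref{thm:m-rational-function} first and then reading off coefficients is cleaner and is presumably the paper's intended logical order.

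To extract Theorem~\ref{thm:m-explicit-formula} from Theorem~\ref{thm:m-rational-function}, I would expand the rational function as a geometric series in the quantity $\dd := x(qt-t+1) + y(qt-t+1) - xy\bigl[(qt-t+1)^2 - t(1-t)(q-1)\cdot(\text{sign})\bigr]$ — more precisely, write the denominator as $1 - \bigl(x\alpha + y\alpha\bigr) - xy\,\gamma$ with $\gamma := t(1-t)(q-1) - \alpha^2$, so $F = \sum_{k\geq 0}\bigl(x\alpha + y\alpha + xy\gamma\bigr)^k$, apply the multinomial theorem, and collect the coefficient of $x^m y^n$. Grouping by the exponent $a$ of the $xy\gamma$ term gives a sum over $a$ of $\binom{\text{multinomial}}{}\alpha^{m+n-2a}\gamma^a$ times a binomial factor that resolves to $\binom{m}{a}\binom{n}{a}$ after a Vandermonde-type simplification, matching $M_{m,n}(q,t) = \sum_a \binom{m}{a}\binom{n}{a} t^a(1-t)^a(q-1)^a(qt-t+1)^{m+n-2a}$ once one checks the sign bookkeeping $\gamma = -(qt-t+1)^2 + t(1-t)(q-1)$ versus the stated $t^a(1-t)^a(q-1)^a$ — so a careful sign/exponent audit is the place errors can creep in.

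The main obstacle I anticipate is establishing the bivariate recurrence for $M_{m,n}(q,t)$ with the exactly right coefficient on the mixed term: the $M$-triangle is a M\"obius-function sum over the shuffle lattice, and shuffle-lattice intervals do not decompose as simply as in a product of chains, so getting the cross term $t(1-t)(q-1)xy$ — rather than, say, $\alpha^2 xy$, which would merely factor the generating function — requires genuinely using Greene's or McConville--M\"uhle's finer enumeration of the lattice (how many elements of a given rank, and the M\"obius contributions of "mixed" intervals that delete some $x$'s and insert some $y$'s). I expect this to hinge on a known product/induction formula for the characteristic polynomial of $\ShufPoset(m,n)$ restricted to suitable subintervals, combined with the reflection symmetry $M_{m,n} = M_{n,m}$; once that recurrence is in hand, the generating-function manipulation and the coefficient extraction are routine.
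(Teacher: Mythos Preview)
Your proposal has a genuine gap: the entire content of the theorem lives in the step you yourself flag as ``the main obstacle,'' namely establishing a recurrence $M_{m,n} = \alpha M_{m-1,n} + \alpha M_{m,n-1} - (\alpha^{2} - t(1-t)(q-1))\,M_{m-1,n-1}$ directly from the M\"obius function of $\ShufPoset(m,n)$. You offer no mechanism for proving it beyond an appeal that Greene or McConville--M\"uhle ``furnish exactly such structural recursions,'' without identifying one. No such recursion for the $M$-triangle appears in either source; indeed, the fact that the $M$-triangle formula was only a \emph{conjecture} in~\cite{bubblelattices2} indicates the recurrence is not readily available. Upper intervals $[\u,\y]$ in $\ShufPoset(m,n)$ are not isomorphic to a single $\ShufPoset(m-1,n)$ or $\ShufPoset(m,n-1)$ after one cover step, so the ``local structure of cover relations'' argument you sketch does not produce the recurrence. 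The recurrence is of course a trivial \emph{consequence} of the rational function, but that is the wrong direction.

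The paper avoids any recurrence. Instead it proceeds by: (i) writing $M_{m,n}(q,t) = \sum_{\u}(qt)^{\rk(\u)}\ch_{[\u,\y]}(t)$ via Lemma~\ref{lem: mtriangle_char_poly}; (ii) decomposing each interval $[\u,\y]$ as a product $\prod_{i}\ShufPoset(\eta_{i}^{\u},\lambda_{i}^{\u})$ of smaller shuffle lattices, indexed by the blocks that the $y$-letters of $\u$ cut out (Lemma~\ref{lem:subposet_decomp}); (iii) using multiplicativity of the characteristic polynomial under products together with Greene's known formula for $\ch_{m,n}$ (Proposition~\ref{prop:shuffle_char}) to obtain an explicit multi-sum over compositions for $M_{m,n}(-q,-t)$; and (iv) summing $\sum_{m,n}M_{m,n}(-q,-t)x^{m}y^{n}$ directly by repeatedly applying $\sum_{k\geq 0}\binom{n+k}{k}z^{k} = (1-z)^{-n-1}$ to collapse the indices one at a time (Lemma~\ref{lem:lhs_rational_function}). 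The rational function then falls out after several geometric-series simplifications, and the sign substitution $(-q,-t)\mapsto(q,t)$ gives the statement. Your coefficient-extraction plan for deducing Theorem~\ref{thm:m-explicit-formula} from the rational function is fine and matches Lemma~\ref{lem:rhs_rational_function}, but the route \emph{to} the rational function is where all the work is, and your proposal does not supply it.
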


Then, using the explicit form of the $M$-triangle, we resolve another conjecture by McConville and M\"uhle on an intriguing relationship between the $M$-triangle and $H$-triangle. 
\begin{restatable}{cor}{fromhtom}\label{cor:h-to-m}
    For $m, n \geq 0,$ we have 
    \begin{equation*}\label{eq:h_to_m}
		M_{m,n}(q,t) = (1-t)^{m+n}H_{m,n}\left(\frac{t(q-1)}{1-t},\frac{q}{q-1}\right).
	\end{equation*}
\end{restatable}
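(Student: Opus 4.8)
The plan is to use the explicit formula for the $M$-triangle from Theorem~\ref{thm:m-explicit-formula} together with the known formula for the $H$-triangle (proven by McConville and M\"uhle in \cite{bubblelattices2}), and simply verify that the two sides agree after the stated substitution. First I would recall the $H$-triangle formula; from the structure of the paper it should take the shape $H_{m,n}(q,t) = \sum_{a \geq 0} \binom{m}{a}\binom{n}{a} q^a (1+t)^{?}\cdots$ (some product of binomials times a monomial in $q$ and a power of $(1+t)$ or $(1 + qt)$), indexed by a single summation parameter $a$ that counts the same ``diagonal'' data as in the $M$-triangle formula. The key observation is that both $M_{m,n}$ and $H_{m,n}$ are sums over $a \geq 0$ of $\binom{m}{a}\binom{n}{a}$ times a rank-one factor depending on $a$ and an ambient factor raised to the power $m+n-2a$; so it suffices to check the identity termwise, i.e. for each fixed $a$.

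Concretely, I would substitute $q \mapsto \frac{t(q-1)}{1-t}$ and $t \mapsto \frac{q}{q-1}$ into the $H$-triangle formula, multiply by $(1-t)^{m+n}$, and match the result term-by-term against
\[
\binom{m}{a}\binom{n}{a} t^a (1-t)^a (q-1)^a (qt-t+1)^{m+n-2a}.
\]
The $(1-t)^{m+n}$ prefactor distributes as $(1-t)^{2a}$ onto the rank-one factor and $(1-t)^{m+n-2a}$ onto the ambient factor; the hope (and what makes the corollary true) is that under the substitution the ambient factor of the $H$-triangle, times $(1-t)$, becomes exactly $qt - t + 1$, while the rank-one factor, times $(1-t)^2$, becomes exactly $t(1-t)(q-1)$ — wait, that is degree $2a$ on the left so it should match $t^a(1-t)^a(q-1)^a$ after taking $a$th powers. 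So the whole verification reduces to two elementary algebraic identities in $q$ and $t$: one for the base of the ambient power and one for the base of the rank-one power. I would write these out, clear denominators, and expand; each is a short polynomial identity.

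The main obstacle is essentially bookkeeping rather than conceptual: one must get the exact form of the $H$-triangle formula right (including which variable plays which role, since the substitution swaps the roles of $q$ and $t$ in a nontrivial fractional way) and track the powers of $(1-t)$ and $(q-1)$ carefully so that the denominators introduced by the substitution $t \mapsto \frac{q}{q-1}$ cancel cleanly against the $(1-t)^{m+n}$ and against each other. A secondary point to be careful about: the summation ranges must match, i.e. the substitution should not shift the index set over which $a$ runs, and the binomial factors $\binom{m}{a}\binom{n}{a}$ are manifestly invariant, so no reindexing is needed. Once the two base identities are verified, taking $a$th powers, multiplying by the (unchanged) binomial coefficients, and summing over $a$ completes the proof; I expect the entire argument to be under half a page.
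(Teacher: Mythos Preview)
Your proposal is correct and matches the paper's approach exactly: the paper simply says the corollary follows by comparing the explicit formulas in Theorem~\ref{thm:m-explicit-formula} and Theorem~\ref{thm:h_explicit}, and your termwise verification (with $H_{m,n}(q,t) = \sum_{a}\binom{m}{a}\binom{n}{a}q^a(qt+1)^{m+n-2a}$) is precisely how that comparison plays out.
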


\section{Preliminaries}

For integers $m, n \geq 0,$ consider the two disjoint sets of letters 
\begin{displaymath}
	X = \{x_{1},\ldots,x_{m}\}\quad\text{and}\quad Y=\{y_{1},\ldots,y_{n}\}.
\end{displaymath}

\begin{definition}
    A \defn{shuffle word} is an \emph{order-preserving, simple} word over $X \cup Y,$ which is a word using letters from $X$ or $Y$ without duplicates, such that if $x_i$ and $x_j$ both appear in the word, $x_i$ appears before $x_j$ and if $y_i$ and $y_j$ both appear in the word, $y_i$ appears before $y_j$ whenever $i < j.$ Let $\ShufPoset(m,n)$ be the set of all shuffle words.
\end{definition}
For example, the word $\bl{y_1y_2}\rd{x_2}\bl{y_3}\rd{x_5x_6}$ belongs to $\ShufPoset(6,3)$, but the words $\bl{y_1y_1}\rd{x_2}\bl{y_3}\rd{x_5x_6}$ and $\bl{y_1y_2}\rd{x_2}\bl{y_3}\rd{x_6x_5}$ do not.

\begin{definition}
    The \defn{indel} operation is the operation on a word $\u \in \ShufPoset(m,n)$ that either deletes a letter from $X$ or inserts a letter from $Y.$
\end{definition}

This operation gives the set of shuffle words a lattice structure. We call this lattice the \defn{shuffle lattice} $\ShufPoset(m,n),$ which has order relation the reflexive and transitive closure of the indel operation, denoted as $\shufleq$~\cite{greene1988posets}. 

\begin{figure}
        \centering
        \includegraphics[scale=1]{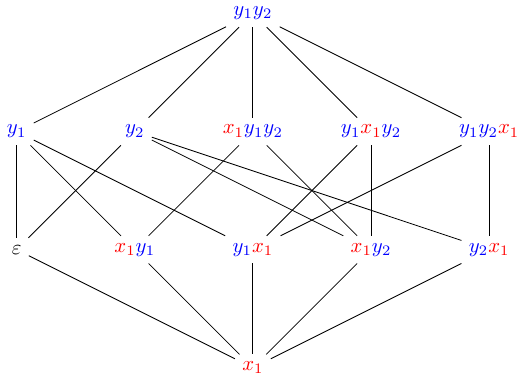}
		\caption{The shuffle lattice $\ShufPoset(1,2)$.}
		\label{fig:shuffle_12}
\end{figure}

If $\u$ and $\v$ are shuffle words, let $\u_{\v}$ be the subword of $\u$ consisting of letters that appear in $\v.$ Additionally, let $\x$ and $\y$ denote the words $x_1\dots x_m$ and $y_1 \dots y_n,$ respectively.  

\section{The $M$-triangle of $\ShufPoset(m,n)$}

We now look at the $M$-triangle, which is a bivariate specialization of the characteristic polynomial of the shuffle lattice $\ShufPoset(m,n).$ 

For a finite poset $\Poset=(P,\leq)$, the \defn{M{\"o}bius function} $\mu_{\Poset}\colon P\times P\to\mathbb{Z}$ is defined recursively by
\begin{displaymath}
    \mu_{\Poset}(u,v) \defs \begin{cases}1, & \text{if}\;u=v,\\-\sum_{u\leq r<v}\mu_{\Poset}(u,r), & \text{if}\;u<v,\\0, & \text{otherwise}.\end{cases}
\end{displaymath}
Additionally, for an element of a graded poset $v \in \Poset,$ let $\rk(v)$ denote the rank function. 

In 2022, McConville and M\"uhle defined an analog of the characteristic polynomial of a graded poset using the M\"obius function. 

\begin{definition}[\cite{bubblelattices2}]
    The \defn{(reverse) characteristic polynomial} of a graded poset $\Poset$ is
\begin{displaymath}
	\ch_{\Poset}(q) \defs \sum_{v\in P}\mu_{\Poset}(\hat{0},v)q^{\rk(v)},
\end{displaymath}
where $\hat{0}$ is the minimum element of $\Poset.$ 
\end{definition}

The $M$-triangle is defined by McConville and M\"uhle as the bivariate specialization of the reverse characteristic polynomial, providing us more combinatorial information than its single-variable analog:
\begin{definition}[{\cite{bubblelattices2}}]
The \defn{$M$-triangle} of a graded poset $\Poset$ is 
\begin{displaymath}
	M_{\Poset}(q,t) \defs \sum_{u\leq v}\mu_{\Poset}(u,v)q^{\rk(u)}t^{\rk(v)}.
\end{displaymath}
\end{definition}

In this paper, we are interested in the $M$-triangle of $\ShufPoset(m,n)$. So, let $\ch_{m,n}(q)$ denote $\ch_{\ShufPoset(m,n)}(q)$, let $\mu_{m,n}(\u,\v)$ denote $\mu_{\ShufPoset(m,n)}(\u,\v),$ and let $M_{m,n}(q,t)$ denote $M_{\ShufPoset(m,n)}(q,t)$. 

The closed form of the reverse characteristic polynomial of a shuffle lattice $\ShufPoset(m, n)$ looks like the following
\begin{prop}[{\cite[Proposition~5.3]{bubblelattices2}}]\label{prop:shuffle_char} 
For $m,n\geq 0$, we have 
	\begin{displaymath}
		\ch_{m,n}(q) = \sum_{a\geq 0}\binom{m}{a}\binom{n}{a}(-q)^{a}(1-q)^{m+n-a}.
	\end{displaymath}
\end{prop}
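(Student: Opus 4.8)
The plan is to compute the Möbius function $\mu_{m,n}(\hat{0},\v)$ of the shuffle lattice explicitly and then evaluate $\ch_{m,n}(q)=\sum_{\v}\mu_{m,n}(\hat{0},\v)q^{\rk(\v)}$. First I would pin down the combinatorial model: the minimum $\hat{0}$ is the all-$X$ word $\x=x_1\cdots x_m$, every shuffle word satisfies $\hat{0}\shufleq\v$, and the rank is $\rk(\v)=(m-|\v|_X)+|\v|_Y$, where $|\v|_X,|\v|_Y$ count the $X$- and $Y$-letters of $\v$. I would also record the order characterization $\u\shufleq\v$ iff $X(\v)\subseteq X(\u)$, $Y(\u)\subseteq Y(\v)$, and $\u_{\v}=\v_{\u}$ (the common subwords agree), where $X(\cdot),Y(\cdot)$ denote the sets of $X$- and $Y$-letters that appear. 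This reduces the Proposition to (i) evaluating each $\mu_{m,n}(\hat{0},\v)$ and (ii) performing the weighted sum.

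For (i), the key lemma is that the lower interval $[\hat{0},\v]$ factors as a direct product of full shuffle lattices. Descending from $\v$ amounts to inserting the $m-|\v|_X$ absent $X$-letters and deleting the $|\v|_Y$ present $Y$-letters; two such moves interact precisely when an absent $X$-letter may legally be inserted on either side of a present $Y$-letter, and the resulting ``interaction'' relation partitions these letters into blocks. I would show $[\hat{0},\v]\cong\prod_{B}\ShufPoset(a_B,b_B)$, a product over blocks $B$, where $a_B,b_B$ count the absent $X$- and present $Y$-letters in $B$. Since the Möbius function is multiplicative over direct products and $\mu_{a,b}(\hat{0},\hat{1})=(-1)^{a+b}\binom{a+b}{b}$ for a full shuffle lattice, this yields $\mu_{m,n}(\hat{0},\v)=(-1)^{\rk(\v)}\prod_{B}\binom{a_B+b_B}{b_B}$; in particular the sign is always $(-1)^{\rk(\v)}$. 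The ingredient $\mu_{a,b}(\hat{0},\hat{1})=(-1)^{a+b}\binom{a+b}{b}$ is itself a short induction from the recursive definition of $\mu$ (or an application of Weisner's theorem to a well-chosen atom).

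With the Möbius values in hand, step (ii) is a generating-function computation. I would introduce $G(x,y,q)=\sum_{m,n\ge 0}x^my^n\ch_{m,n}(q)$ and, by summing the block-decomposition formula for $\mu_{m,n}(\hat{0},\v)q^{\rk(\v)}$ over all words, show that $G$ has the closed rational form $\big[(1-x(1-q))(1-y(1-q))+q(1-q)xy\big]^{-1}$. Expanding the reciprocal as a geometric series in the cross term $q(1-q)xy$ and applying $\sum_{m\ge a}\binom{m}{a}z^m=z^a/(1-z)^{a+1}$ to each factor then recovers exactly $\ch_{m,n}(q)=\sum_{a\ge 0}\binom{m}{a}\binom{n}{a}(-q)^a(1-q)^{m+n-a}$; this last extraction is routine. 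As a consistency check, this closed form for $G$ is the specialization $q\mapsto 0$, $t\mapsto q$ of the series in Theorem~\ref{thm:m-rational-function}, reflecting that $\ch_{m,n}(q)$ is the coefficient of $q^{0}$ in $M_{m,n}(q,t)$ after setting $t=q$; I would not use this as the proof, however, since Theorem~\ref{thm:m-rational-function} is logically downstream of the present Proposition.

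The main obstacle is the interval-factorization lemma in step (i): making precise when an absent $X$-letter and a present $Y$-letter interact, verifying that distinct blocks are order-independent, and identifying each block's subinterval with a full shuffle lattice $\ShufPoset(a_B,b_B)$. The pair of small examples $[\hat{0},x_1y_1]\cong\ShufPoset(1,1)$ (a single block) versus $[\hat{0},x_2y_1]\cong\ShufPoset(1,0)\times\ShufPoset(0,1)$ (two blocks) already shows that the block structure depends genuinely on the interleaving of letter indices, not merely on $|\v|_X$ and $|\v|_Y$, so the combinatorial bookkeeping is the delicate step; once it is set up correctly, both the Möbius evaluation and the summation follow mechanically.
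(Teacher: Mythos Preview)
The paper does not prove this proposition; it is quoted verbatim from \cite{bubblelattices2} and used as input. So there is no ``paper's own proof'' to compare against, and your proposal should be judged on its own merits and against the techniques the paper does develop.

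Your plan is sound and, in fact, runs exactly dual to what the paper does one level up. The block decomposition of $[\x,\v]$ you describe is the order-dual of Lemma~\ref{lem:subposet_decomp}: under the anti-isomorphism $\ShufPoset(m,n)\cong\ShufPoset(n,m)^{\mathrm{op}}$ obtained by swapping the roles of $X$ and $Y$, your lower interval becomes an upper interval $[\u,\y]$, and your ``blocks'' are precisely the pieces cut out by the \emph{present} $X$-letters of $\v$, just as Lemma~\ref{lem:subposet_decomp} cuts along the present $Y$-letters of $\u$. Framing it this way would let you import the interval factorization rather than redo the bookkeeping. The value $\mu_{a,b}(\hat 0,\hat 1)=(-1)^{a+b}\binom{a+b}{b}$ is Greene's result and is indeed a short induction, so step~(i) is fine.

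The place where your sketch is thinner than it sounds is the first half of step~(ii): ``by summing the block-decomposition formula \ldots\ show that $G$ has the closed rational form''. Because the block structure depends on the precise set of present $X$-letters and on how the present $Y$-letters interleave with them, this sum over $\v$ is a sum over pairs of weak compositions together with a $\binom{n}{r}$ choice of which $Y$-letters appear; collapsing it to the stated rational function is essentially the same computation the paper carries out in the proof of Lemma~\ref{lem:lhs_rational_function} (several rounds of reindexing and applying $(1-z)^{-(a+1)}=\sum_{k}\binom{a+k}{k}z^{k}$, followed by geometric series). It goes through, but it is not materially shorter than the paper's argument for the $M$-triangle, so I would not call that part routine. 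Your final coefficient extraction from $G$ and the consistency check against Theorem~\ref{thm:m-rational-function} via $M_{m,n}(0,t)=\ch_{m,n}(t)$ are both correct.
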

McConville and M\"uhle also conjecture an analogous explicit form for the $M$-triangle of a shuffle lattice, which is our main result that we repeat here for the convenience of the reader.

\mexplicit*

To prove this theorem, we write the $M$-triangle of a shuffle lattice $\ShufPoset(m, n)$ in terms of the reverse characteristic polynomial, by applying the following lemma of McConville and M\"uhle: 
\begin{lemma}[{\cite[Lemma~5.1]{bubblelattices2}}]\label{lem: mtriangle_char_poly}
    We have 
    \begin{align*}
    M_{\Poset}(q,t) = \sum_{u\in P}(qt)^{\rk(u)}\ch_{[u,\hat{1}]}(t).
\end{align*}
\end{lemma}
Then, we have that 
\begin{align*}
    M_{m,n}(q,t) &= \sum_{\u\in \ShufPoset(m, n)}(qt)^{\rk(\u)}\ch_{[\u,\hat{1}_{\ShufPoset(m, n)}]}(t).
\end{align*}
In $\ShufPoset(m, n),$ the maximal element $\hat{1}_{\ShufPoset(m, n)}$ is $\y.$ Additionally, it is easy to see that $\rk(\u) = |\u_{\y}|+ m - |\u_{\x}|.$ Recall that $|\u_{\y}|$ and $|\u_{\x}|$ are the number of $y_i$'s and $x_i$'s in $\u,$ respectively. Thus, we have that 
\begin{align*}
    M_{m,n}(q,t) &= \sum_{\u\in \ShufPoset(m, n)}(qt)^{|\u_{\y}|+ m - |\u_{\x}|}\ch_{[\u, \y]}(t).
\end{align*}

Fix a shuffle word $\u \in \ShufPoset(m, n).$ The subword $\u_{\y}$ can be written as 
\[\u_{\y} = y_{i_1}\dots y_{i_{k^{\u}}},\]
where $k^{\u} = |\u_{\y}|$ and $1 \leq i_1 < \dots < i_{k^{\u}} \leq n.$

This subword $y_{i_1}\dots y_{i_{k^{\u}}}$ partitions the remaining letters in $\u$ and $\y$ into ${k^{\u}}+1$ parts, possibly empty. Let the tuple $\lambda^{\u} = (\lambda_1^{\u}, \dots, \lambda_{{k^{\u}}+1}^{\u})$ count the size of each of these parts in $\y.$ That is, in $\y,$ there are $\lambda_1^{\u}$ letters to the left of $y_{i_1},$ there are $\lambda_{{k^{\u}}+1}^{\u}$ letters to the right of $y_{i_{k^{\u}}},$ and there are $\lambda_i^{\u}$ letters strictly between $y_{i-1}$ and $y_{i}$ for $2 \leq i \leq {k^{\u}}.$ We define $\eta^{\u} = (\eta_1^{\u}, \dots, \eta_{{k^{\u}}+1}^{\u})$ similarly, as the sizes of the subwords that $y_{i_1}\dots y_{i_{k^{\u}}} = \u_{\y}$ divides $\u_{\x}$ into.

\begin{figure}
	\centering
		\includegraphics[page=1,scale=1]{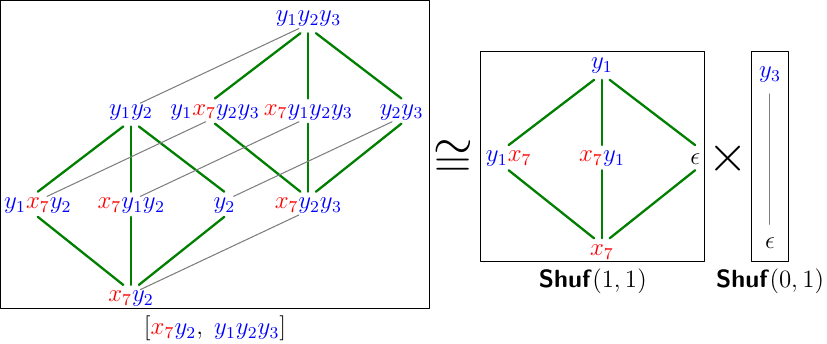}
		\caption{The product decomposition of $[\u, \y]$ in $\ShufPoset(7,3),$ where $\u =\rd{x_7}\bl{y_2}$ and $\y = \bl{y_1y_2y_3}.$ }
		\label{fig:shuf-product}
\end{figure}

\begin{lemma}\label{lem:subposet_decomp}
For all $\u \in \ShufPoset(m,n)$ and $\lambda^{\u}, \eta^{\u}$ defined as above, we have that
\begin{equation*}
    [\u, \y] \cong \ShufPoset(\eta_1^{\u}, \lambda_1^{\u}) \times \dots \times \ShufPoset(\eta_{{k^{\u}}+1}^{\u}, \lambda_{{k^{\u}}+1}^{\u}). 
\end{equation*}
\end{lemma}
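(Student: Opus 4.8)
The plan is to describe the interval $[\u, \y]$ explicitly as a set of shuffle words and then exhibit an order isomorphism to the product of smaller shuffle lattices by splitting each word along the positions of the letters of $\u_{\y}$.

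First I would characterize the interval. A shuffle word $\w$ satisfies $\u \shufleq \w \shufleq \y$ exactly when $\w$ is obtained from $\u$ by a sequence of indels and from $\w$ one can reach $\y$ by a further sequence of indels. Since an indel either deletes an $x$-letter or inserts a $y$-letter, going up from $\u$ means deleting some of the $x$-letters still present in $\u$ and inserting some of the $y$-letters not yet present; going up from $\w$ to $\y$ means all remaining $x$-letters get deleted and all missing $y$-letters get inserted. Hence $\w \in [\u,\y]$ iff $\w_{\x} \subseteq \u_{\x}$ (as a subword, equivalently as a subset since both are order-preserving and simple) and $\w_{\y} \supseteq \u_{\y}$, together with the order-preserving/simple condition that makes $\w$ a legitimate shuffle word.

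Next I would set up the bijection. Write $\u_{\y} = y_{i_1}\cdots y_{i_{k^{\u}}}$ as in the text; these $k^{\u}$ letters occur in $\w$ for every $\w \in [\u,\y]$, and they cut $\w$ into $k^{\u}+1$ consecutive blocks $\w^{(1)}, \dots, \w^{(k^{\u}+1)}$ (the letters before $y_{i_1}$, between consecutive $y_{i_j}$'s, and after $y_{i_{k^{\u}}}$). Each block $\w^{(j)}$ is itself an order-preserving simple word whose $x$-letters form a subword of $\eta_j^{\u}$ available $x$-letters (those of $\u_{\x}$ lying in the $j$-th gap) and whose $y$-letters form a subword of the $\lambda_j^{\u}$ available $y$-letters (those of $\y$ strictly between $y_{i_{j-1}}$ and $y_{i_j}$), with no constraints coupling different blocks because the fixed letters $y_{i_j}$ force the relative order across blocks automatically. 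After relabeling the available letters in the $j$-th gap as $x_1,\dots,x_{\eta_j^{\u}}$ and $y_1,\dots,y_{\lambda_j^{\u}}$, the block $\w^{(j)}$ is an arbitrary element of $\ShufPoset(\eta_j^{\u},\lambda_j^{\u})$. The map $\w \mapsto (\w^{(1)},\dots,\w^{(k^{\u}+1)})$ is therefore a bijection from $[\u,\y]$ to the product of these shuffle-word sets, with inverse given by concatenating blocks with the $y_{i_j}$'s reinserted.

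Finally I would check that this bijection is an order isomorphism. Since an indel acts locally — deleting or inserting a single letter in one of the gaps, never moving or touching a fixed letter $y_{i_j}$ — a cover relation $\w \lessdot \w'$ in $[\u,\y]$ corresponds to a cover in exactly one coordinate $\ShufPoset(\eta_j^{\u},\lambda_j^{\u})$ with the others fixed, and conversely; taking reflexive-transitive closures, $\w \shufleq \w'$ in $[\u,\y]$ iff $\w^{(j)} \shufleq (\w')^{(j)}$ for all $j$, which is precisely the product order. This gives the claimed isomorphism. The main obstacle, and the only point requiring genuine care, is the first step: verifying cleanly that the fixed occurrences of $y_{i_1},\dots,y_{i_{k^{\u}}}$ are present in every word of the interval and that the order-preserving/simple constraints decouple across the gaps, so that no spurious interaction between blocks arises; once that is pinned down, the bijection and the order-compatibility are routine.
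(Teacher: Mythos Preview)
Your approach matches the paper's: split each word of the interval along the fixed letters $y_{i_1},\dots,y_{i_{k^{\u}}}$, relabel within each block, and check that the resulting map is an order isomorphism. The paper is in fact terser than your plan---it simply asserts the block decomposition and declares the order-preservation a routine verification.

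There is, however, a genuine error in your characterization of the interval. You write that $\w \in [\u,\y]$ iff $\w_{\x} \subseteq \u_{\x}$, $\w_{\y} \supseteq \u_{\y}$, and $\w$ is a legitimate shuffle word. This ``iff'' is false. Take $m=1$, $n=2$, $\u = y_1x_1$, $\w = x_1y_1$: all three conditions hold, yet $\u \not\shufleq \w$, since indels never move $x_1$ from one side of $y_1$ to the other. What is missing is a relative-order condition on the common letters: restricting $\w$ to the alphabet $\w_{\x} \cup \u_{\y}$ must reproduce the corresponding restriction of $\u$. This is exactly what forces the $x$-letters of each block $\w^{(j)}$ to come from the $j$-th gap of $\u$ rather than from $\u_{\x}$ at large---a fact you use in the second step but which does not follow from the characterization you wrote down. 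Once you add and verify this condition (immediate from the definition of an indel, since an indel never alters the relative order of existing letters), the rest of your argument goes through as described; you did correctly flag this decoupling step as the main obstacle.
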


\begin{proof}
Let $\u_{\y} = y_{i_1}\dots y_{i_{k^{\u}}}.$ Observe that every element of $[\u, \y]$ can be written in the form 
\begin{displaymath}
    \w_1'y_{i_1}\w_2'\dots \w_{k^{\u}}'y_{i_{k^{\u}}}\w_{{k^{\u}}+1}'
\end{displaymath}
in a unique way, where each word consists of letters $x_{a_s'}$'s such that $\sum_{t = 1}^{j-1}\eta^{\u}_t < a_s' \leq \sum_{t = 1}^{j}\eta^{\u}_t$ and letters $y_{b'}$'s such that $i_{j-1} < b' < i_{j}.$ For each $x_{a_s'}$ in $\w_j',$ let $a_s = a_s' - \sum_{t = 1}^{j-1} \eta^{\u}_t.$ For each $y_{b'},$ let $b = b' - i_{j-1}.$ Let $\w_j$ be the word consisting of $x_{a_s}$'s and $y_b$'s in the same relative order as our $x_{a_s'}$'s and $y_{b'}$'s. This is the word that results from shifting the indices of the $x_{a_s'}$'s in $\w_j'$ back by $\sum_{t = 1}^{j-1} \eta^{\u}_t$ and the $y_{b'}$'s back by $i_{j-1}.$ 

Now, notice that the map from $[\u, \y]$ to $\ShufPoset(\eta_1, \lambda_1) \times \dots \times  \ShufPoset(\eta_{{k^{\u}}+1}, \lambda_{{k^{\u}}+1})$ defined by sending 
\begin{displaymath}
    \w_1'y_{i_1}\w_2'\dots \w_{k^{\u}}'y_{i_{k^{\u}}}\w_{{k^{\u}}+1}' \mapsto (\w_1, \w_2, \dots, \w_{{k^{\u}}+1})
\end{displaymath}
is a bijection. That this bijection is order-preserving is a routine verification. \end{proof}

Additionally, we can show that the reverse characteristic polynomial of a product of posets decomposes nicely: 

\begin{prop}\label{lem:char_poly_product}
Let $\Poset$ and $\Qoset$ be finite, graded posets. Then, 
\begin{displaymath}
    \ch_{\Poset \times \Qoset}(q) = \ch_{\Poset}(q)\ch_{\Qoset}(q).
\end{displaymath}
\end{prop}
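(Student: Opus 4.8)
The plan is to reduce the statement to two standard structural facts about the direct product of posets and then simply expand the definition of $\ch$. First I would verify that the product of two finite graded posets is again graded: if $\Poset$ and $\Qoset$ have minimum elements $\hat{0}_{\Poset}$ and $\hat{0}_{\Qoset}$, then $\Poset\times\Qoset$ has minimum element $(\hat{0}_{\Poset},\hat{0}_{\Qoset})$, and every maximal chain of an interval $[(u_1,u_2),(v_1,v_2)]$ is an interleaving of a maximal chain of $[u_1,v_1]$ with a maximal chain of $[u_2,v_2]$; hence all such chains have the same length and $\rk(v_1,v_2)=\rk(v_1)+\rk(v_2)$. This additivity of the rank function is the only genuinely combinatorial input, and it is the step I expect to need the most care — everything after it is formal.

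Second, I would invoke (and, for completeness, quickly reprove) the product formula for the M\"obius function,
\[
\mu_{\Poset\times\Qoset}\big((u_1,u_2),(v_1,v_2)\big)=\mu_{\Poset}(u_1,v_1)\,\mu_{\Qoset}(u_2,v_2),
\]
which is classical (see, e.g., Stanley's \emph{Enumerative Combinatorics}). The one-line justification I would give is that the right-hand side, viewed as a function of $(v_1,v_2)$ with $(u_1,u_2)$ fixed, satisfies the defining recursion of $\mu_{\Poset\times\Qoset}$: summing it over $(u_1,u_2)\le(r_1,r_2)\le(v_1,v_2)$ and using that an interval in a product is the product of the corresponding intervals gives $\big(\sum_{u_1\le r_1\le v_1}\mu_{\Poset}(u_1,r_1)\big)\big(\sum_{u_2\le r_2\le v_2}\mu_{\Qoset}(u_2,r_2)\big)=\delta_{u_1,v_1}\,\delta_{u_2,v_2}$, and the M\"obius function is the unique function with this property. (An induction on $|P|\cdot|Q|$ would be an equally acceptable alternative.)

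With these two facts in hand the proposition is immediate. Writing $P$ and $Q$ for the underlying sets of $\Poset$ and $\Qoset$,
\[
\ch_{\Poset\times\Qoset}(q)=\sum_{(v_1,v_2)\in P\times Q}\mu_{\Poset\times\Qoset}\big((\hat{0}_{\Poset},\hat{0}_{\Qoset}),(v_1,v_2)\big)\,q^{\rk(v_1)+\rk(v_2)}=\Big(\sum_{v_1\in P}\mu_{\Poset}(\hat{0}_{\Poset},v_1)q^{\rk(v_1)}\Big)\Big(\sum_{v_2\in Q}\mu_{\Qoset}(\hat{0}_{\Qoset},v_2)q^{\rk(v_2)}\Big),
\]
where the last equality splits the exponent $\rk(v_1)+\rk(v_2)$, substitutes the M\"obius product formula, and factors the resulting double sum. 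The right-hand side is exactly $\ch_{\Poset}(q)\,\ch_{\Qoset}(q)$, proving the claim. Finally, I would note that, by induction on the number of factors, the same identity holds for any finite product of graded posets — which is the form in which this proposition is used together with Lemma~\ref{lem:subposet_decomp}.
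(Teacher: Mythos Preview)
Your proof is correct and follows essentially the same route as the paper's: cite the multiplicativity of the M\"obius function over products (the paper invokes Stanley, Proposition~3.8.2), observe that rank is additive in a product, and then split the defining sum for $\ch_{\Poset\times\Qoset}$ into the product $\ch_{\Poset}(q)\ch_{\Qoset}(q)$. You include a bit more justification for the gradedness of the product and a sketch of the M\"obius identity, but the argument is the same.
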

\begin{proof}

From Proposition~3.8.2 in~\cite{stanley}, we know that for any two elements $(s, t), (s', t') \in \Poset \times \Qoset,$ we have that $\mu_{\Poset \times \Qoset}((s, t), (s', t')) = \mu_{\Poset}(s, s')\mu_{\Qoset}(t, t').$ It is also easy to see that $\rk_{\Poset \times \Qoset}((s, t)) = \rk_{\Poset}(s) + \rk_{\Qoset}(t)$ for all $(s, t) \in \Poset \times \Qoset.$ Then, using the definition of the reverse characteristic polynomial, we have
\begin{align*}
    \ch_{\Poset \times \Qoset}(q) &= \sum_{(s, t) \in P \times Q} \mu_{\Poset \times \Qoset}(\hat{0}_{\Poset \times \Qoset}, (s, t))q^{\rk_{\Poset \times \Qoset}(s, t)} \\
    &= \sum_{(s, t) \in P \times Q} \mu_{\Poset}(\hat{0}_{\Poset}, s) \mu_{\Qoset}(\hat{0}_{\Qoset}, t) q^{\rk_{\Poset}(s) + \rk_{\Qoset}(t)} \\
    &= \left(\sum_{s \in P} \mu_{\Poset}(\hat{0}_{\Poset}, s) q^{\rk_{\Poset}(s)} \right)\left(\sum_{t \in Q} \mu_{\Qoset}(\hat{0}_{\Qoset}, t) q^{\rk_{\Qoset}(t)}\right) \\
    &= \ch_{\Poset}(q)\ch_{\Qoset}(q), 
\end{align*}
as desired.
\end{proof}

Using Proposition~\ref{lem:char_poly_product}, Lemma~\ref{lem:subposet_decomp} and Proposition~\ref{prop:shuffle_char}, we can then compute an explicit form of the reverse characteristic polynomial of the subposet $[\u, \y]$ of a shuffle lattice $\ShufPoset(m, n):$ 
\begin{align*}
    \ch_{[\u, \y]}(q)  &= \ch_{\ShufPoset(\eta_1^{\u}, \lambda_1^{\u}) \times \dots \times \ShufPoset(\eta_{{k^{\u}}+1}^{\u}, \lambda_{{k^{\u}}+1}^{\u})}(q) \\
    &= \prod_{i = 1}^{{k^{\u}}+1} \ch_{\ShufPoset(\eta_i^{\u}, \lambda_i^{\u})}(q).
\end{align*}
Thus, using the explicit formula of the reverse characteristic polynomial as found in Proposition~\ref{prop:shuffle_char}, we have that 
\begin{equation}\label{eq:subposet_char}
    \ch_{[\u, \y]}(q) = \prod_{i = 1}^{{k^{\u}}+1} \sum_{a \geq 0} {\eta_i^{\u} \choose a}{\lambda_i^{\u} \choose a} (-q)^a (1-q)^a.
\end{equation}

We can use this explicit formula in Equation~\ref{eq:subposet_char} to compute the $M$-triangle using Lemma~\ref{lem: mtriangle_char_poly}: 
\begin{align*}
    M_{m,n}(q,t) &= \sum_{\u\in \ShufPoset(m, n)}(qt)^{|\u_{\y}|+ m - |\u_{\x}|}\ch_{[\u, \y]}(t) \\
    &= \sum_{\u\in \ShufPoset(m, n)} (qt)^{|\u_{\y}|+ m - |\u_{\x}|} \prod_{i = 1}^{{k^{\u}}+1} \sum_{a \geq 0} {\eta_i^{\u} \choose a}{\lambda_i^{\u} \choose a} (-t)^a (1-t)^a.
\end{align*}

Let us now group this sum by the sequences $\lambda^{\u} = (\lambda_1^{\u}, \dots, \lambda_{k^{\u}+1}^{\u})$ and $\eta^{\u} = (\eta_1^{\u}, \dots, \eta_{k^{\u}+1}^{\u}).$ 

Fix an integer $k$ and sequences $\lambda = (\lambda_1, \dots, \lambda_{k+1})$ and $\eta = (\eta_1, \dots, \eta_{k+1})$ of nonnegative integers such that $\lambda_1 + \dots + \lambda_{k+1} = n-k$ and $\eta_1 + \dots + \eta_{k+1} \leq m.$ We will now determine the number of $\u \in \ShufPoset(m,n)$ such that $\lambda^{\u} = \lambda$ and $\eta^{\u} = \eta.$ It is ${m\choose j},$ where $j = \eta_1 + \dots + \eta_{k+1}.$ This is because there is exactly one way to choose $k$ letters from $\y$ to form $\u_{\y}$ so that the remaining $n-k$ letters in $\y$ form consecutive subwords of length $\lambda_1, \dots, \lambda_{k+1}.$ However, we can choose any $j$ of the $m$ letters of $\x$ to form $\u_{\x},$ which we know will be partitioned into consecutive subwords of size $\eta_1, \dots, \eta_{k+1}$ in $\u.$ 

Therefore, we can rewrite our expression for $M_{m,n}(q,t)$ to sum over $k = |\u_{\y}|$ and $j = |\u_{\x}|,$ as well as the sequences $\lambda$ and $\mu.$ 
\allowdisplaybreaks{
\begin{align*}
    M_{m,n}(q,t) &= \sum_{\u\in \ShufPoset(m, n)}(qt)^{|\u_{\y}|+ m - |\u_{\x}|}\ch_{[\u, \y]}(t) \\
    &= \sum_{\u\in \ShufPoset(m, n)} (qt)^{|\u_{\y}|+ m - |\u_{\x}|} \prod_{i = 1}^{k+1} \sum_{a \geq 0} {\eta_i^{\u} \choose a}{\lambda_i^{\u} \choose a} (-q)^a (1-q)^a \\
    &= \sum_{j, k \geq 0} (qt)^{k + m - j} {m \choose j} \sum_{\substack{\eta_1 + \dots + \eta_{k+1} = j\\ \lambda_1 + \dots + \lambda_{k+1}= n-k}} \prod_{i = 1}^{k+1} \sum_{a \geq 0} {\lambda_i \choose a} {\eta_i \choose a} (-t)^a (1-t)^{\eta_i + \lambda_i - a}.
\end{align*}}

To simplify this expression, we set $q$ and $t$ to $-q$ and $-t$ respectively. This yields 
\begin{align*}
    M_{m,n}(-q,-t) &= \sum_{j, k \geq 0} (qt)^{k + m - j} {m \choose j} \sum_{\substack{\eta_1 + \dots + \eta_{k+1} = j\\ \lambda_1 + \dots + \lambda_{k+1}= n-k}} \prod_{i = 1}^{k+1} \sum_{a \geq 0} {\lambda_i \choose a} {\eta_i \choose a} t^a (t+1)^{\eta_i + \lambda_i - a} \\
    &= \sum_{j, k \geq 0} (qt)^{j+k} {m \choose j} \sum_{\substack{\eta_1 + \dots + \eta_{k+1} = m-j\\ \lambda_1 + \dots + \lambda_{k+1}= n-k}} \prod_{i = 1}^{k+1} \sum_{a \geq 0} {\lambda_i \choose a} {\eta_i \choose a} t^a (t+1)^{\eta_i + \lambda_i - a}.
\end{align*}
To show the explicit form of the $M$-triangle as stated in Theorem~\ref{thm:m-explicit-formula}, we show that $\sum_{m, n \geq 0} M_{m, n}(-q, -t) x^m y^n$ has the following rational function expansion: 
\allowdisplaybreaks{
\begin{restatable}{lemma}{lhs}\label{lem:lhs_rational_function}
    We have that
    \begin{align*}
        \sum_{m,n \geq 0} M_{m, n}(-q, -t) x^m y^n &= \frac{1}{(1-x(qt + t+1))(1-y(qt + t+ 1)) - t(t+1)(q+1)xy}.
    \end{align*}
\end{restatable}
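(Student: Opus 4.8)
The plan is to pass to the bivariate generating function
\[
F(x,y)\;:=\;\sum_{m,n\geq 0}M_{m,n}(-q,-t)\,x^m y^n
\]
and exploit the product structure already present in the displayed formula for $M_{m,n}(-q,-t)$: the inner factor $\prod_{i=1}^{k+1}\sum_{a\geq 0}\binom{\eta_i}{a}\binom{\lambda_i}{a}t^a(1+t)^{\eta_i+\lambda_i-a}$ is a product of $k+1$ copies of one fixed building block, so that, after a suitable change of variable, $F$ becomes a geometric series. I would use the first of the two displayed forms of $M_{m,n}(-q,-t)$, where $\eta_1+\cdots+\eta_{k+1}=j$ and the weight is $(qt)^{k+m-j}\binom{m}{j}$, and write $B(\eta,\lambda):=\sum_{a\geq 0}\binom{\eta}{a}\binom{\lambda}{a}t^a(1+t)^{\eta+\lambda-a}$. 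Substituting this formula into $F$ and interchanging all the summations is legitimate as an identity of formal power series, since the coefficient of each $x^m y^n$ is a finite sum.

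The key step is to pair the binomial $\binom{m}{j}$ with the factor $(qt)^{m-j}$ rather than with $x^m$: because $\sum_{m\geq j}\binom{m}{j}(qt)^{m-j}x^m=\frac{x^j}{(1-qtx)^{j+1}}=\frac{1}{1-qtx}\,v^j$ with $v:=\frac{x}{1-qtx}$, and because $j=\eta_1+\cdots+\eta_{k+1}$ forces $v^j=\prod_{i} v^{\eta_i}$, the sum over $m$ distributes over the $k+1$ parts. Likewise the constraint $\lambda_1+\cdots+\lambda_{k+1}=n-k$ dissolves once $n$ is summed, each $\lambda_i$ then ranging freely over $\mathbb{Z}_{\geq 0}$ and contributing $y^k\prod_i y^{\lambda_i}$. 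After carrying this out,
\[
F(x,y)=\frac{1}{1-qtx}\sum_{k\geq 0}(qty)^k\,G(v,y)^{k+1},\qquad G(v,y):=\sum_{\eta,\lambda\geq 0}v^\eta y^\lambda B(\eta,\lambda),
\]
and summing the geometric series in $k$ (which converges $y$-adically, as $G$ has constant term $1$) gives $F(x,y)=\dfrac{G(v,y)}{(1-qtx)\bigl(1-qty\,G(v,y)\bigr)}$. To evaluate $G$ I would move the $a$-sum outermost and apply $\sum_{n\geq a}\binom{n}{a}z^n=\frac{z^a}{(1-z)^{a+1}}$ in each of $v$ and $y$; the powers of $1+t$ cancel, and a final geometric summation in $a$ yields
\[
G(v,y)=\frac{1}{\bigl(1-v(1+t)\bigr)\bigl(1-y(1+t)\bigr)-t(1+t)vy}.
\]

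It then remains to substitute $v=\frac{x}{1-qtx}$ and simplify. Using $1-v(1+t)=\frac{1-x(qt+t+1)}{1-qtx}$ and $t(1+t)vy=\frac{t(1+t)xy}{1-qtx}$, one finds $G(v,y)=\frac{1-qtx}{(1-x(qt+t+1))(1-y(1+t))-t(1+t)xy}$; substituting into the formula for $F$ the factors $1-qtx$ cancel, as does the common numerator, leaving
\[
F(x,y)=\frac{1}{(1-x(qt+t+1))\bigl(1-y(1+t)\bigr)-t(1+t)xy-qty(1-qtx)}.
\]
Finally I would verify that this denominator equals the claimed $(1-x(qt+t+1))(1-y(qt+t+1))-t(t+1)(q+1)xy$: writing $\alpha=qt+t+1$ and $\beta=1+t$, so that $\alpha-\beta=qt$, the difference of the two denominators simplifies to $qt\,xy\,(\alpha-\beta-qt)=0$. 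The only genuinely non-routine point in the whole argument is the pairing of $\binom{m}{j}$ with $(qt)^{m-j}$ in the second paragraph — without it $F$ does not factor over the parts — and everything else is bookkeeping: keeping the substitution $v=x/(1-qtx)$ straight, performing two short geometric summations, and checking the final polynomial identity.
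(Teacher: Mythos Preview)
Your proof is correct and follows essentially the same route as the paper's own proof: sum out the $m$-variable first via $\sum_{m}\binom{m}{j}(qt)^{m-j}x^m=\frac{1}{1-qtx}v^j$, let the compositions $(\eta_i)$ and $(\lambda_i)$ range freely so that the inner sum factors as $G(v,y)^{k+1}$, evaluate $G$ by two negative-binomial summations and a geometric series, then sum the geometric series in $k$ and simplify the denominator. The only cosmetic difference is that you start from the form with $\eta_1+\cdots+\eta_{k+1}=j$ and sum over $m$, whereas the paper starts from the equivalent form with $\eta_1+\cdots+\eta_{k+1}=m-j$, shifts $m\mapsto m+j$, and sums over $j$; after that re-indexing the two computations are line-for-line the same.
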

\begin{proof}
    We know that 
    \begin{align*}
        &\phantom=\phantom. \sum_{m, n\geq 0} M_{m, n}(-q, -t) x^m y^n \\
        &= \sum_{m, n \geq 0} x^my^n \sum_{j, k \geq 0} (qt)^{j+k} {m \choose j} \sum_{\substack{\eta_1 + \dots + \eta_{k+1} = m-j\\ \lambda_1 + \dots + \lambda_{k+1}= n-k}} \prod_{i = 1}^{k+1} \sum_{a \geq 0} {\lambda_i \choose a} {\eta_i \choose a} t^a (t+1)^{\eta_i + \lambda_i - a}. 
    \end{align*}
    Then, re-indexing so there are no dependencies between the indices $j, k, m$ and $n,$ this expression is equal to 
    \begin{align*}
        &\phantom= \sum_{j, k, m, n \geq 0} x^{m+j} y^{n+k} (qt)^{j+k} {m+j \choose j} \sum_{\substack{\eta_1 + \dots + \eta_{k+1} = m\\ \lambda_1 + \dots + \lambda_{k+1}= n}} \prod_{i = 1}^{k+1} \sum_{a \geq 0} {\lambda_i \choose a} {\eta_i \choose a} t^a (t+1)^{\eta_i + \lambda_i - a}.
    \end{align*}
    In this series, there is only one binomial coefficient that involves $j.$ So, we can apply the power series expansion $\frac{1}{(1-x)^{n+1}} = \sum_{k \geq 0} {{n+k}\choose n}x^k$ to eliminate the sum involving $j:$ 
    \begin{align*}
        &\phantom= \sum_{j, k, m, n \geq 0} x^{m+j} y^{n+k} (qt)^{j+k} {m+j \choose j} \sum_{\substack{\eta_1 + \dots + \eta_{k+1} = m\\ \lambda_1 + \dots + \lambda_{k+1}= n}} \prod_{i = 1}^{k+1} \sum_{a \geq 0} {\lambda_i \choose a} {\eta_i \choose a} t^a (t+1)^{\eta_i + \lambda_i - a} \\
        &= \sum_{k, m, n \geq 0} x^m y^n (qty)^k \left(\sum_{\substack{\eta_1 + \dots + \eta_{k+1} = m\\ \lambda_1 + \dots + \lambda_{k+1}= n}} \prod_{i = 1}^{k+1} \sum_{a \geq 0} {\lambda_i \choose a} {\eta_i \choose a} t^a (t+1)^{\eta_i + \lambda_i - a} \right) \sum_{j \geq 0} {m+j \choose j} (qtx)^j \\
        &= \sum_{k, m, n \geq 0} x^m y^n (qty)^k \left(\sum_{\substack{\eta_1 + \dots + \eta_{k+1} = m\\ \lambda_1 + \dots + \lambda_{k+1}= n}} \prod_{i = 1}^{k+1} \sum_{a \geq 0} {\lambda_i \choose a} {\eta_i \choose a} t^a (t+1)^{\eta_i + \lambda_i - a}\right) \frac{1}{(1-qtx)^{m+1}} \\
        &= \frac{1}{1- qtx} \sum_{k, m, n \geq 0} \left(\frac{x}{1-qtx}\right)^m y^n (qty)^k \sum_{\substack{\eta_1 + \dots + \eta_{k+1} = m\\ \lambda_1 + \dots + \lambda_{k+1}= n}} \prod_{i = 1}^{k+1} \sum_{a \geq 0} {\lambda_i \choose a} {\eta_i \choose a} t^a (t+1)^{\eta_i + \lambda_i - a}. 
    \end{align*}
    That is, 
    \begin{equation}\label{eq:M-to-f}
        \sum_{m, n \geq 0} M_{m, n} (-q, -t) x^m y^n = \frac{1}{1- qtx} f(q, t, x, y),
    \end{equation}
    where $f(q, t, x, y)$ is defined to be 
    \begin{align*}
        f(q, t, x, y) = \sum_{k, m, n \geq 0} \left(\frac{x}{1-qtx}\right)^m y^n (qty)^k \sum_{\substack{\eta_1 + \dots + \eta_{k+1} = m\\ \lambda_1 + \dots + \lambda_{k+1}= n}} \prod_{i = 1}^{k+1} \sum_{a \geq 0} {\lambda_i \choose a} {\eta_i \choose a} t^a (t+1)^{\eta_i + \lambda_i - a}.
    \end{align*}
    
    We will now compute $f(q, t, x, y).$ We remove the dependencies between $m$ and $\eta_1, \dots, \eta_{k+1}$ and $n$ and $\lambda_1, \dots, \lambda_{k+1}$ by summing over all $\eta_i, \lambda_i \geq 0$ and setting $m = \eta_1 + \dots + \eta_{k+1}$ and $n = \lambda_1 + \dots + \lambda_{k+1},$ which gives us  
    \begin{align*}
        &\phantom= f(q, t, x, y) \\
        &= \sum_{k, m, n \geq 0} \left(\frac{x}{1-qtx}\right)^m y^n (qty)^k \sum_{\substack{\eta_1 + \dots + \eta_{k+1} = m\\ \lambda_1 + \dots + \lambda_{k+1}= n}} \prod_{i = 1}^{k+1} \sum_{a \geq 0} {\lambda_i \choose a} {\eta_i \choose a} t^a (t+1)^{\eta_i + \lambda_i - a} \\
        &= \sum_{k \geq 0} (qty)^k \sum_{\substack{\eta_1, \dots, \eta_{k+1}, \\ \lambda_1, \dots, \lambda_{k+1} \geq 0}} \left(\frac{x}{1-qtx}\right)^{\eta_1 + \dots +\eta_{k+1}} y^{\lambda_1 + \dots + \lambda_{k+1}} \prod_{i = 1}^{k+1} \sum_{a \geq 0} {\lambda_i \choose a} {\eta_i \choose a} t^a (t+1)^{\eta_i + \lambda_i - a} \\
        &= \sum_{k \geq 0} (qty)^k \prod_{i = 1}^{k+1} \sum_{\eta_i, \lambda_i \geq 0} \left(\frac{x}{1-qtx}\right)^{\eta_i} y^{\lambda_i} \sum_{a \geq 0} {\lambda_i \choose a} {\eta_i \choose a} t^a (t+1)^{\eta_i + \lambda_i - a}.
    \end{align*}
    Now, we re-index to remove the dependencies between $a$ and $\eta_i, \lambda_i:$ 
    \begin{align*}
        &\phantom= \sum_{k \geq 0} (qty)^k \prod_{i= 1}^{k+1}\sum_{\eta_i, \lambda_i \geq 0} \left(\frac{x}{1-qtx}\right)^{\eta_i} y^{\lambda_i} \sum_{a \geq 0} {\lambda_i \choose a} {\eta_i \choose a} t^a (t+1)^{\eta_i + \lambda_i - a} \\
        &= \sum_{k \geq 0} (qty)^k \prod_{i = 1}^{k+1} \sum_{\eta_i, \lambda_i, a \geq 0} \left(\frac{x}{1-qtx}\right)^{\eta_i + a} y^{\lambda_i + a} {\lambda_i + a \choose a} {\eta_i + a\choose a} t^a (t+1)^{\eta_i + \lambda_i + a}.
    \end{align*}
    Again, we notice that there is only one binomial coefficient involving $\eta_i$ and one binomial coefficient involving $\lambda_i,$ so we can eliminate the sums involving these variables  
    \begin{align*}
        &\phantom= \sum_{k \geq 0} (qty)^k \prod_{i = 1}^{k+1} \sum_{\eta_i, \lambda_i, a \geq 0} \left(\frac{x(t+1)}{1-qtx}\right)^{\eta_i} (y(t+1))^{\lambda_i + a} \left(\frac{xyt(t+1)}{1-qtx}\right)^a {\lambda_i + a \choose a} {\eta_i + a\choose a}\\
        &= \sum_{k \geq 0} (qty)^k \prod_{i = 1}^{k+1} \sum_{a \geq 0} \left(\frac{xyt(t+1)}{1-qtx}\right)^a \frac{1}{\left(1 - \frac{x(t+1)}{1-qtx}\right)^{a+1} \left(1 - y(t+1)\right)^{a+1}}. 
    \end{align*}
    Next, we eliminate the sum involving $a$ by using the closed form of an infinite geometric series.
    \begin{align*}
        &\phantom= \sum_{k \geq 0} (qty)^k \prod_{i = 1}^{k+1} \frac{1 - qtx}{\left(1- qtx - x(t+1)\right)(1 - y(t+1))}\sum_{a \geq 0} \left(\frac{xyt(t+1)}{(1 - qtx - x(t+1))\left(1 - y(t+1)\right)}\right)^a \\
        &= \sum_{k \geq 0} (qty)^k \prod_{i = 1}^{k+1} \left(\frac{1 - qtx}{\left(1- qtx - x(t+1)\right)(1 - y(t+1))}\right)\left( \frac{1}{1- \frac{xyt(t+1)}{(1 - qtx - x(t+1))\left(1 - y(t+1)\right)}}\right) \\
        &= \sum_{k \geq 0} (qty)^k \left(\frac{1 - qtx}{\left(1- qtx - x(t+1)\right)(1 - y(t+1)) - xyt(t+1)} \right)^{k+1}. 
    \end{align*}
    That is, 
    \begin{equation}\label{eq:f-to-g}
        f(q, t, x, y) = \frac{1-qtx}{\left(1- qtx - x(t+1)\right)(1 - y(t+1)) - xyt(t+1)} g(q, t, x, y),
    \end{equation}
    where $g(q, t, x, y)$ is defined to be 
    \begin{align*}
        g(q, t, x, y) &= \sum_{k \geq 0} \left(\frac{qty(1 - qtx)}{\left(1- qtx - x(t+1)\right)(1 - y(t+1)) - xyt(t+1)} \right)^{k}.
    \end{align*}
    Then, we can re-write the expression for $\sum_{m,n \geq 0} M_{m, n}(-q, -t) x^m y^n$ to be in terms of $g(q, t, x, y)$ using Equation~\ref{eq:M-to-f} and Equation~\ref{eq:f-to-g}:
    \begin{align*}
        \sum_{m,n \geq 0} M_{m, n}(-q, -t) x^m y^n &= \frac{1}{1-qtx} f(q, t, x, y) \\
        &= \frac{1}{1-qtx} \frac{1-qtx}{\left(1- qtx - x(t+1)\right)(1 - y(t+1)) - xyt(t+1)} g(q, t, x, y) \\
        &= \frac{1}{\left(1- qtx - x(t+1)\right)(1 - y(t+1)) - xyt(t+1)}g(q, t, x, y).
    \end{align*}
    Finally, we determine $g(q, t, x, y)$ using the closed form of a geometric series, which yields the rational function 
    \begin{align*}
        g(q, t, x, y) &= \sum_{k \geq 0} \left(\frac{qty(1 - qtx)}{\left(1- qtx - x(t+1)\right)(1 - y(t+1)) - xyt(t+1)} \right)^{k} \\
        &= \frac{1}{1 - \frac{qty(1 - qtx)}{\left(1- qtx - x(t+1)\right)(1 - y(t+1)) - xyt(t+1)}} \\
        &=  \frac{\left(1- qtx - x(t+1)\right)(1 - y(t+1)) - xyt(t+1)}{\left(1- qtx - x(t+1)\right)(1 - y(t+1)) - xyt(t+1) - qty(1 - qtx)}.
    \end{align*}
    Therefore, we have 
    \begin{align*}
        \sum_{m,n \geq 0} M_{m, n}(-q, -t) x^m y^n &= \frac{1}{\left(1- qtx - x(t+1)\right)(1 - y(t+1)) - xyt(t+1)}g(q, t, x, y) \\
        &= \frac{1}{\left(1- qtx - x(t+1)\right)(1 - y(t+1)) - xyt(t+1) - qty(1 - qtx)} \\
        &= \frac{1}{(1-x(qt + t+1))(1-y(qt + t+ 1)) - t(t+1)(q+1)xy},
    \end{align*}
    as desired. 
\end{proof}

}

Then, using Lemma~\ref{lem:lhs_rational_function}, we can find the rational function representation of the $M$-triangle, by setting $-q$ to $q$ and $-t$ to $t,$ which gives us 
\begin{equation}\label{eq:m-triangle-rational}
\sum_{m, n \geq 0} M_{m, n}(q, t) x^m y^n = \frac{1}{(1-x(qt - t+1))(1-y(qt - t+ 1)) + t(1-t)(q+1)xy},
\end{equation}
proving Theorem~\ref{thm:m-rational-function} as introduced at the beginning of the paper.

Alternatively, we can prove Lemma~\ref{lem:lhs_rational_function} by first proving the following identity using a combinatorial argument, which we include in the appendix of the paper. 
\begin{restatable}{lemma}{theidentity}\label{lem:inner_sum}
    For $m, n, k \geq 0,$ we have
    \begin{equation*}
        \sum_{\substack{\eta_1 + \dots + \eta_{k+1} = m\\ \lambda_1+ \dots + \lambda_{k+1} = n}} \prod_{i = 1}^{k+1} \sum_{a \geq 0} {\lambda_i \choose a} {\eta_i \choose a} t^a (t+1)^{\eta_i - a} = {n+ k \choose k} \sum_{\ell \geq 0} {{m+k}\choose {\ell + k}}{{n+ k + \ell} \choose \ell} t^{\ell},
    \end{equation*}
    where the left hand side is a sum over all sequences of nonnegative integers $(\eta_1, \dots, \eta_{k+1})$ and $(\lambda_1, \dots, \lambda_{k+1})$ such that $\eta_1 + \dots + \eta_{k+1} = m$ and $\lambda_1+ \dots + \lambda_{k+1} = n.$ 
\end{restatable}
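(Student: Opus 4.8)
The plan is to extract the coefficient of $t^{\ell}$ from both sides and match the two expressions using a pair of Vandermonde-type convolution identities, each of which has a one-line combinatorial (balls-in-boxes) proof.

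First I would rewrite the inner sum over $a$ so that each term carries a single power of $t$. Expanding $(t+1)^{\eta_i-a}=\sum_{j\ge 0}\binom{\eta_i-a}{j}t^{j}$, collecting the power $c=a+j$, and using the subset-of-a-subset identity $\binom{\eta_i}{a}\binom{\eta_i-a}{c-a}=\binom{\eta_i}{c}\binom{c}{a}$ followed by Vandermonde's identity $\sum_{a}\binom{\lambda_i}{a}\binom{c}{a}=\binom{\lambda_i+c}{c}$, one gets
\[
\sum_{a\ge 0}\binom{\lambda_i}{a}\binom{\eta_i}{a}t^{a}(t+1)^{\eta_i-a}=\sum_{c\ge 0}\binom{\eta_i}{c}\binom{\lambda_i+c}{c}\,t^{c}.
\]
Substituting this into the left-hand side of the lemma and reading off the coefficient of $t^{\ell}$ yields
\[
[t^{\ell}]\ \mathrm{LHS}=\sum_{\substack{c_1+\dots+c_{k+1}=\ell\\ c_i\ge 0}}\ \Bigl(\sum_{\eta_1+\dots+\eta_{k+1}=m}\ \prod_{i=1}^{k+1}\binom{\eta_i}{c_i}\Bigr)\Bigl(\sum_{\lambda_1+\dots+\lambda_{k+1}=n}\ \prod_{i=1}^{k+1}\binom{\lambda_i+c_i}{c_i}\Bigr).
\]

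Next I would evaluate the two inner convolutions; the crucial point is that each depends on $(c_1,\dots,c_{k+1})$ only through $\ell=\sum_i c_i$, namely
\[
\sum_{\eta_1+\dots+\eta_{k+1}=m}\ \prod_{i=1}^{k+1}\binom{\eta_i}{c_i}=\binom{m+k}{\ell+k},\qquad \sum_{\lambda_1+\dots+\lambda_{k+1}=n}\ \prod_{i=1}^{k+1}\binom{\lambda_i+c_i}{c_i}=\binom{n+\ell+k}{\ell+k}.
\]
Each is immediate from the series $\sum_{\eta\ge 0}\binom{\eta}{c}z^{\eta}=z^{c}(1-z)^{-(c+1)}$ and $\sum_{\lambda\ge 0}\binom{\lambda+c}{c}z^{\lambda}=(1-z)^{-(c+1)}$, by taking the product over $i$ and reading off $[z^{m}]$, respectively $[z^{n}]$; alternatively each has a short bijective proof, the first by encoding a composition of $m$ into $k+1$ blocks with $c_i$ marked items in block $i$ as a word with $k$ dividers, $\ell$ marked items and $m-\ell$ unmarked items, and the second by distributing $n$ items among the $\sum_i(c_i+1)=\ell+k+1$ boxes and then selecting which $k$ of the $\ell+k$ internal box-walls are block boundaries. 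Since $\ell$ has $\binom{\ell+k}{k}$ weak compositions into $k+1$ parts, this gives
\[
[t^{\ell}]\ \mathrm{LHS}=\binom{\ell+k}{k}\binom{m+k}{\ell+k}\binom{n+\ell+k}{\ell+k}.
\]

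Finally I would compare with the right-hand side, whose coefficient of $t^{\ell}$ is $\binom{n+k}{k}\binom{m+k}{\ell+k}\binom{n+k+\ell}{\ell}$; it then remains only to check the elementary identity $\binom{\ell+k}{k}\binom{n+\ell+k}{\ell+k}=\binom{n+k}{k}\binom{n+k+\ell}{\ell}$, both sides being equal to the trinomial coefficient $\frac{(n+k+\ell)!}{k!\,\ell!\,n!}$. I expect the main obstacle to be the second convolution in the middle step: because $c_i$ sits inside the binomial coefficient there, the natural combinatorial model counts the $\ell+k+1$ boxes coming from $\sum_i(c_i+1)$ rather than the $k+1$ blocks, and one must take care that the block-boundary data is recovered correctly; everything else is routine bookkeeping. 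As a byproduct, the first two steps also give a purely algebraic proof of the identity via generating functions, parallel to the manipulations in Lemma~\ref{lem:lhs_rational_function}.
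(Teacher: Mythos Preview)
Your proof is correct and shares its opening move with the paper: both of you first collapse the inner sum via the subset-of-a-subset identity and Vandermonde to obtain $\sum_{c\ge 0}\binom{\eta_i}{c}\binom{\lambda_i+c}{c}t^{c}$. From that point on, however, the two arguments diverge.

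The paper interprets the coefficient of $t^{\ell}$ combinatorially in terms of rows of green and red balls, dividers, and two kinds of stickers; it introduces an auxiliary summation index $r$ (the number of stickered red balls), derives the expression $\sum_{r}\binom{n}{r}\binom{n+k}{k}\binom{m+k}{r+k}\binom{m-r}{m-\ell}$, and then proves a second identity $\sum_{r}\binom{n}{r}\binom{m+k}{r+k}\binom{m-r}{\ell-r}=\binom{m+k}{\ell+k}\binom{n+k+\ell}{\ell}$ by yet another balls-and-stickers bijection. Your route is more direct: once the $c_i$'s are fixed, the $\eta$- and $\lambda$-sums decouple, and you evaluate each by a one-line generating-function convolution, the key observation being that both answers depend only on $\ell=\sum_i c_i$. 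The remaining sum over weak compositions of $\ell$ into $k+1$ parts then contributes a bare factor $\binom{\ell+k}{k}$, and you finish with the trinomial identity $\binom{\ell+k}{k}\binom{n+\ell+k}{\ell+k}=\binom{n+k}{k}\binom{n+k+\ell}{\ell}$.

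What each approach buys: the paper's argument is fully bijective and gives a concrete combinatorial model for every step, at the cost of an extra parameter and a second identity to prove. Your argument is considerably shorter and essentially mechanical once one notices that the convolutions are blind to the individual $c_i$; it also makes clear, as you note, that the whole lemma is really a coefficient extraction that could have been folded into the generating-function manipulations of Lemma~\ref{lem:lhs_rational_function}. One small remark: your bijective description for the $\lambda$-convolution (``selecting which $k$ of the $\ell+k$ internal box-walls are block boundaries'') is slightly off, since the block boundaries are already determined by the fixed tuple $(c_1,\dots,c_{k+1})$; but this does not affect the argument, as the generating-function computation you give is complete on its own.
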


By performing a similar process as in the proof of Lemma~\ref{lem:lhs_rational_function}, we can show that the explicit formula for the $M$-triangle has the same rational function representation: 
\begin{lemma}\label{lem:rhs_rational_function}
    We have
    \begin{align*}
        &\phantom= \sum_{m, n \geq 0}x^m y^n \sum_{a \geq 0} {n \choose a}{m \choose a} t^a (t+1)^a (q+1)^a (qt + t + 1)^{m+n-2a} \\
        &= \frac{1}{(1-x(qt + t+1))(1-y(qt + t+ 1)) - t(t+1)(q+1)xy}.
    \end{align*}
\end{lemma}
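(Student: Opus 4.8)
The plan is to verify the rational-function identity of Lemma~\ref{lem:rhs_rational_function} directly by summing the bivariate generating series term by term, using only elementary power-series manipulations analogous to (but simpler than) those in the proof of Lemma~\ref{lem:lhs_rational_function}. Write $S = \sum_{m,n\geq 0} x^m y^n \sum_{a\geq 0}\binom{n}{a}\binom{m}{a} t^a(t+1)^a(q+1)^a (qt+t+1)^{m+n-2a}$. Set $A = qt+t+1$ and $B = t(t+1)(q+1)$ so that the claimed right-hand side is $1/\big((1-Ax)(1-Ay) - Bxy\big)$. First I would interchange the order of summation to put the sum over $a$ outermost:
\begin{align*}
S = \sum_{a\geq 0} B^a \sum_{m\geq a}\binom{m}{a} x^m A^{m-a} \sum_{n\geq a}\binom{n}{a} y^n A^{n-a}.
\end{align*}
Here I have used $t^a(t+1)^a(q+1)^a = B^a$.

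Next I would evaluate each inner single-variable sum in closed form. Re-indexing $m = a + i$ gives $\sum_{i\geq 0}\binom{a+i}{a} (Ax)^i \cdot x^a = x^a \cdot (1-Ax)^{-(a+1)}$ by the standard identity $\sum_{i\geq 0}\binom{a+i}{a} z^i = (1-z)^{-(a+1)}$, and likewise for the $n$-sum. Substituting,
\begin{align*}
S = \sum_{a\geq 0} B^a \cdot \frac{x^a}{(1-Ax)^{a+1}} \cdot \frac{y^a}{(1-Ay)^{a+1}} = \frac{1}{(1-Ax)(1-Ay)} \sum_{a\geq 0} \left(\frac{Bxy}{(1-Ax)(1-Ay)}\right)^a.
\end{align*}
The geometric series sums to $\big(1 - \tfrac{Bxy}{(1-Ax)(1-Ay)}\big)^{-1}$, and clearing the common factor $(1-Ax)(1-Ay)$ yields exactly $S = 1/\big((1-Ax)(1-Ay) - Bxy\big)$, which upon substituting back $A = qt+t+1$, $B = t(t+1)(q+1)$ is the desired identity. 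Finally I would remark that this matches the rational function in Lemma~\ref{lem:lhs_rational_function}, thereby giving an independent derivation of the explicit formula in Theorem~\ref{thm:m-explicit-formula} (equivalently, comparing coefficients of $x^m y^n$ on the two sides).

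I do not expect any serious obstacle: the only points requiring a word of care are the justification for interchanging the (absolutely convergent, in the formal power-series sense) summations over $a$, $m$, $n$, and the bookkeeping that $t^a(t+1)^a(q+1)^a$ really collapses to the single symbol $B^a$ while the leftover exponent $(qt+t+1)^{m+n-2a}$ distributes as $A^{m-a}\cdot A^{n-a}$. If one prefers to avoid even the appearance of convergence issues, the same computation can be read purely formally in $\mathbb{Z}[q,t][[x,y]]$, since for fixed $m,n$ only finitely many values of $a$ contribute. The mild alternative route — deducing this lemma from the combinatorial identity of Lemma~\ref{lem:inner_sum} — would be longer and is unnecessary here.
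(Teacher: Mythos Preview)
Your proof is correct and follows essentially the same route as the paper's: interchange the order of summation to put $a$ outermost, use $\sum_{i\geq 0}\binom{a+i}{a}z^i=(1-z)^{-(a+1)}$ to collapse the $m$- and $n$-sums, and finish with a geometric series. The only cosmetic difference is your convenient abbreviations $A=qt+t+1$, $B=t(t+1)(q+1)$, which streamline the bookkeeping.
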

\begin{proof}
    First, we change the order of summations, giving us 
    \begin{align*}
        &\phantom= \sum_{m,n \geq 0} x^m y^n \sum_{a \geq 0} {n \choose a}{m \choose a} t^a (t+1)^a (q+1)^a (qt + t + 1)^{m+n-2a} \\
        &= \sum_{a \geq 0} \left(\frac{t(t+1)(q+1)}{(qt+t+1)^2}\right)^a \sum_{m \geq a} {m \choose a} (x(qt+t+1))^m \sum_{n \geq a} {n \choose a} (y(qt+t+1))^n.
    \end{align*}
    Then, by applying the power series expansion $\frac{1}{(1-x)^{n+1}} = \sum_{k \geq 0} {{n+k}\choose n}x^k$ twice, we have that this sum is equal to 
    \begin{align*}
        &\phantom= \sum_{a \geq 0} \left(\frac{t(t+1)(q+1)}{(qt+t+1)^2}\right)^a \sum_{m \geq a} {m \choose a} (x(qt+t+1))^m \sum_{n \geq a} {n \choose a} (y(qt+t+1))^n \\
        &= \sum_{a \geq 0} \left(\frac{t(t+1)(q+1)}{(qt+t+1)^2}\right)^a \left(\frac{(x(qt+t+1))^a}{(1- x(qt+t+1))^{a+1}}\right)\left( \frac{(y(qt+t+1))^a}{(1- y(qt+t+1))^{a+1}}\right).
    \end{align*}
    This is an infinite geometric series with rational function 
    \begin{align*}
        \frac{\frac{1}{(1- x(qt+t+1))(1- y(qt+t+1))}}{1 - \frac{t(t+1)(q+1)xy}{(1- x(qt+t+1))(1- y(qt+t+1))}} 
        &= \frac{1}{(1-x(qt + t+1))(1-y(qt + t+ 1)) - t(t+1)(q+1)xy},
    \end{align*}
    so we are done. 
\end{proof}

Since the rational function representation of the explicit formula found in Lemma~\ref{lem:rhs_rational_function} matches with the rational function representation of the $M$-triangle in Equation~\ref{eq:m-triangle-rational}, we can conclude that the $M$-triangle has the following explicit formula, which resolves \cite[Conjecture~5.6]{bubblelattices2}. We restate this result here for the reader's convenience:
\mexplicit*

\section{Relating the $M$-triangle and the $H$-triangle}

Finally, we offer a surprising connection between the $M$-triangle and the $H$-triangle, which is a bivariate variation of the rank generating function of the shuffle lattice.

To define the $H$-triangle, we must consider another operation on shuffle words called a \defn{(forward) transposition} $\u \rightarrow \v,$ where the word $\v$ is obtained from $\u$ by reversing an adjacent pair $x_i y_j.$ In~\cite{bubblelattices1}, McConville and M\"uhle define a \defn{bubble lattice} $\Bub(m,n)$ by placing a new partial order on $\ShufPoset(m,n)$ defined by both indels and transpositions. The order relation of $\Bub(m,n)$ is the reflexive and transitive closure of the indel and transposition operation, denoted by $\bubleq.$ 

\begin{figure}
    \centering
        \includegraphics[scale=1]{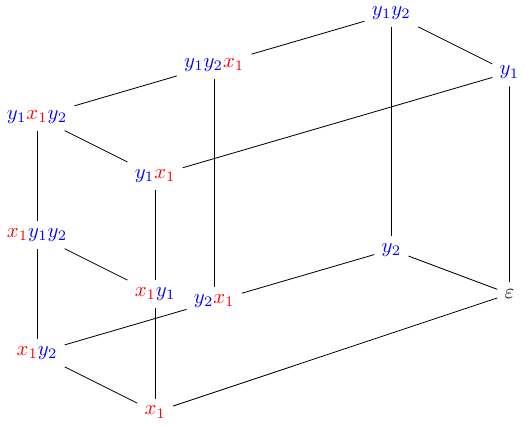}
		\caption{The bubble lattice $\Bub(1,2)$.}
		\label{fig:bubble_12}
\end{figure}

McConville and M\"uhle also provide a characterization of the covering pairs of the bubble lattice in terms of transpositions and a particular type of indel. Let $\u_{\hat{\i}}$ denote the word obtained by deleting letter $u_i$ from $\u.$ Specifically, a \defn{right indel} is defined by
\begin{displaymath}
	\v\sindel\v' \quad\text{if and only if}\quad \begin{cases}\v=\u\;\text{and}\;\v'=\u_{\hat{\i}}, & \text{if}\;u_{i}\in X\;\text{and either}\;u_{i+1}\in X\;\text{or}\;i=k,\\ \v=\u_{\hat{\i}}\;\text{and}\;\v'=\u, & \text{if}\;u_{i},u_{i+1}\in Y.\end{cases}.
\end{displaymath}
If $u_{i}\in X$ and $u_{i+1}\in Y$, then a \defn{(forward) transposition} is a transformation $\u\transpose\u'$, where $\u'=u_{1}u_{2}\cdots u_{i-1}u_{i+1}u_{i}u_{i+2}\cdots u_{k}$. 
\begin{lemma}[{\cite[Lemma~3.5]{bubblelattices1}}]\label{lem:bubble_covers}
For $\u, \v \in \ShufPoset(m,n)$ we have $\u \bubcov \v$ if and only if either $\u\transpose\v$ or $\u\sindel\v.$
\end{lemma}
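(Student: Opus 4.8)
The plan is to prove the two implications separately, after a common reduction: every cover of $\Bub(m,n)$ is realised by a single move. Since $\bubleq$ is the reflexive and transitive closure of the indel and transposition operations, any relation $\u\bubless\v$ comes from a chain $\u=\w_0\to\w_1\to\cdots\to\w_\ell=\v$ of such moves; each move changes the word and $\bubleq$ is a partial order, so if $\u\bubcov\v$ then no internal $\w_j$ may occur and $\ell=1$. Hence $\u\bubcov\v$ forces $\u\to\v$ to be a single transposition --- i.e.\ a forward transposition --- or a single indel, and it remains to decide which single indels are covers. In the other direction, every forward transposition and every right indel is itself an indel or a transposition, hence automatically a $\bubless$-relation, so the lemma amounts to two claims: (a) a non-right indel is never a cover, and (b) a right indel or a forward transposition always is.

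For (a) I would exhibit an element strictly between the endpoints. If $\v=\u_{\hat{\i}}$ deletes an $x_i$ that is immediately followed by a $Y$-letter $y_j$, then $\u\transpose\u'$ swaps that pair and $\u'\to\v$ deletes $x_i$ from its new position, so $\u\bubless\u'\bubless\v$ with $\u'$ a distinct word having the same $X$- and $Y$-counts as $\u$. If $\v$ inserts $y_j$ immediately before an $X$-letter $x_a$, let $\v'$ insert $y_j$ immediately after $x_a$ instead (still a legal position for $y_j$); then $\u\to\v'$ is an indel and $\v'\transpose\v$, so $\u\bubless\v'\bubless\v$. Either way $\u\to\v$ is not a cover, so a covering indel must be a right indel.

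For (b) the key observation is that each generating move weakly increases $|\w_{\y}|$ and weakly decreases $|\w_{\x}|$, and transpositions are the only moves preserving both counts; hence $\u\bubless\w$ with $|\u_{\x}|=|\w_{\x}|$ and $|\u_{\y}|=|\w_{\y}|$ forces $\w$ to be reached from $\u$ by forward transpositions alone, that is, to lie above $\u$ in the ``transposition order'' on shuffle words with a fixed letter set --- a poset whose covers are exactly single swaps of adjacent pairs $x_ay_b\mapsto y_bx_a$. Now if $\u\bubless\w\bubless\v$ with $\v$ a single right indel or forward transposition above $\u$, the count inequalities squeeze those of $\w$ between $\u$ and $\v$. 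When $\u\transpose\v$, all three words share the same counts, so $\u\le\w\le\v$ in the transposition order, contradicting that $\u\transpose\v$ is a cover there. When $\u\sindel\v=\u_{\hat{\i}}$ deletes $x_i$ (followed by an $X$-letter or last), $\w$ has $\u$'s $Y$-count and either $\u$'s or $\v$'s $X$-count; since upward moves never create $X$-letters, the only $X$-letter deleted along the chain is $x_i$, and tracking the position of $x_i$ rules out a proper $\w$ --- the hypothesis that $x_i$ sits at the right end of its block of $X$-letters in $\u$ being exactly what forces this; the right-indel-inserting-a-$Y$ case is symmetric. I expect this last analysis to be the real obstacle: $\Bub(m,n)$ is not graded --- already $\Bub(1,1)$ has maximal chains of lengths $2$ and $3$ from $x_1$ to $y_1$ --- so no rank function is available, and one must show directly that deleting the rightmost $X$-letter of its block commutes past forward transpositions without ever returning below $\u_{\hat{\i}}$. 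Granting this, (a) and (b) together give the lemma.
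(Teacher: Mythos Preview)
The paper does not prove this lemma; it is quoted from \cite[Lemma~3.5]{bubblelattices1} and used as a black box, so there is no in-paper argument to compare your proposal against.

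On its own merits: your reduction to a single generating move is correct, your factorisation of a non-right indel through a transposition in (a) is clean, and the inversion-count argument for the transposition half of (b) is valid (single forward transpositions are exactly the covers of the fixed-letter subposet, since each increases the number of $(y,x)$ inversions by one). The substance, as you say, is the right-indel half of (b), which you leave open.

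Here is one invariant that closes it for the $X$-deletion $\u\sindel\v=\u_{\hat{\i}}$ with $u_i=x_a$. For each $y_d$ occurring in $\u$, let $r_d(\zbf)$ be the number of $X$-letters \emph{other than $x_a$} lying to the right of $y_d$ in $\zbf$. Then $r_d(\u)=r_d(\v)$ for every $d$; a forward transposition leaves all $r_d$ unchanged when it swaps $x_a$ with some $y_e$, and strictly increases $r_e$ otherwise; deleting $x_a$ leaves every $r_d$ unchanged. Now take any $\w$ with $\u\bubleq\w\bubleq\v$ and concatenate witnessing chains to get a chain from $\u$ to $\v$ through $\w$. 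Your count argument shows this chain consists of transpositions together with exactly one deletion, necessarily of $x_a$; constancy of the $r_d$ then forces every transposition on the chain to involve $x_a$. But the right-indel hypothesis says $x_a$ has no $Y$-letter immediately to its right in $\u$, so no such transposition is available before the deletion, and none is available after since $x_a$ is gone. Hence the chain is the single step $\u\to\v$, and $\w\in\{\u,\v\}$. The $Y$-insertion case follows from the dual statistic (for each $x_c$ in $\u$, count $Y$-letters other than the inserted $y_b$ to its left). With this, your outline becomes a complete proof.
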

The \defn{in-degree} of $\v \in \ShufPoset(m,n)$ counts the number of shuffle words that $\v$ covers:
\begin{displaymath}
	\indeg(\v) \defs \bigl\lvert\bigl\{\u\in\ShufPoset(m,n)\colon \u\bubcov\v\bigr\}\bigr\rvert.
\end{displaymath}
Lemma~\ref{lem:bubble_covers} tells us that the covering pairs in $\Bub(m,n)$ can be split into two types. To realize this, the authors of~\cite{bubblelattices2} define the \defn{indel-degree} of $\u$ to be
\begin{displaymath}
	\indeld(\u) \defs \bigl\lvert\bigl\{\u'\in\ShufPoset(m,n)\colon \u'\sindel\u\bigr\}\bigr\rvert,
\end{displaymath}
and the \defn{transpose-degree} of $\u$ to be
\begin{displaymath}
  \transd(\u) \defs \bigl\lvert\bigl\{\u'\in\ShufPoset(m,n)\colon \u'\transpose\u\bigr\}\bigr\rvert.
\end{displaymath}
Clearly, we have $\indeg(\u) = \indeld(\u) + \transd(\u)$. 

We now define the $H$-triangle:
\begin{definition}[{\cite{bubblelattices2}}]
For $m, n \geq 0,$ the \defn{$H$-triangle} of $\Bub(m,n)$ is
\begin{equation*}
	H_{m,n}(q,t) \defs \sum_{\u\in\ShufPoset(m,n)}q^{\indeg(\u)}t^{\indeld(\u)}.
\end{equation*}
\end{definition}
In~\cite{bubblelattices2}, the authors note that the polynomial $H_{m,n}(q,1)$ appears already in as Corollary~4.8 in Greene's paper as the rank-generating polynomial of $\ShufPoset(m,n)$, and was used to establish the rank symmetry of $\ShufPoset(m,n)$ and for constructing a decomposition into symmetrically placed Boolean lattices~\cite{greene1988posets}. They also find an explicit formula for $H_{m,n}(q,t)$:
\begin{theorem}[{\cite[Theorem~1.3]{bubblelattices2}}]\label{thm:h_explicit}
For $m,n\geq 0$, we have
	\begin{displaymath}
		H_{m,n}(q,t) = \sum_{a=0}^{\min\{m,n\}} \binom{m}{a}\binom{n}{a}q^a(qt+1)^{m+n-2a}.
	\end{displaymath}
\end{theorem}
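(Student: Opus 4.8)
The plan is to evaluate $H_{m,n}(q,t)=\sum_{\u\in\ShufPoset(m,n)}q^{\indeg(\u)}t^{\indeld(\u)}$ by translating the two statistics into explicit statistics of $\u$ as a word over $X\cup Y$, and then summing over all shuffle words by a generating-function computation of the kind carried out in Section~3. Since $\indeg(\u)=\indeld(\u)+\transd(\u)$, the sum equals $\sum_{\u}(qt)^{\indeld(\u)}q^{\transd(\u)}$, so the substance of the first step is a combinatorial reading of $\transd(\u)$ and $\indeld(\u)$. From Lemma~\ref{lem:bubble_covers} and the explicit descriptions of the forward transposition and the right indel, I expect $\transd(\u)$ to count the adjacent factors of $\u$ of the form (a letter of $Y$ immediately followed by a letter of $X$), and $\indeld(\u)$ to count the letters $x_i\in X$ absent from $\u$ --- each being inserted into $\u$ from below by a right indel in a unique way --- together with a contribution from the consecutive $Y$'s of $\u$, coming from the right indels that delete a $Y$; the precise bookkeeping at the two ends of the word needs care.

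Granting such a description, encode a shuffle word by the set $A\subseteq\{x_1,\dots,x_m\}$ of $X$-letters it uses, the set $B\subseteq\{y_1,\dots,y_n\}$ of $Y$-letters it uses, and a shuffle shape $\sigma$: a word in $p=|A|$ symbols $X$ and $k=|B|$ symbols $Y$ recording their interleaving. There are $\binom{m}{p}\binom{n}{k}$ choices of $(A,B)$, and the statistics above depend only on $\sigma$ and on $m-p$, so $H_{m,n}(q,t)=\sum_{p,k\ge0}\binom{m}{p}\binom{n}{k}(qt)^{m-p}S_{p,k}(q,t)$ where, writing $b(\sigma)$ and $c(\sigma)$ for the numbers of $YX$- and $YY$-adjacencies of $\sigma$, $S_{p,k}(q,t)=\sum_{\sigma}(qt)^{c(\sigma)}q^{b(\sigma)}$ over shapes with $p$ $X$'s and $k$ $Y$'s. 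A transfer/``gap'' argument --- distribute the $k$ $Y$'s among the $p+1$ gaps around the $X$'s, a gap holding $\ell\ge1$ copies of $Y$ contributing $\ell-1$ to $c$ and, unless it is the last gap, $1$ to $b$ --- gives a product formula for $\sum_{k\ge0}S_{p,k}(q,t)u^{k}$, of the shape $\bigl(1+\tfrac{qu}{1-qtu}\bigr)^{p}\bigl(1+\tfrac{u}{1-qtu}\bigr)$ up to the end-of-word correction. Feeding this back, multiplying by $x^{m}$ and summing over $m\ge p$, then by $y^{n}$ and summing over $n\ge k$ --- using $\tfrac{1}{(1-v)^{a+1}}=\sum_{j\ge0}\binom{a+j}{a}v^{j}$ repeatedly, exactly as in the proof of Lemma~\ref{lem:rhs_rational_function} --- collapses $\sum_{m,n}x^{m}y^{n}H_{m,n}(q,t)$ to an explicit rational function, and the same two geometric-series reductions applied to $\sum_{m,n}x^{m}y^{n}\sum_{a}\binom{m}{a}\binom{n}{a}q^{a}(qt+1)^{m+n-2a}$ give the identical rational function, which proves the formula.

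The main obstacle is the first step: determining precisely what $\indeld$ and $\transd$ count, particularly at the two ends of the word. Since the target formula is symmetric under $m\leftrightarrow n$ whereas the gap decomposition treats the $X$- and $Y$-letters asymmetrically, any slip in the boundary conventions would immediately break that symmetry, so the rational-function comparison doubles as a consistency check on the combinatorial reading. (This formula is originally due to McConville and M\"uhle, \cite[Theorem~1.3]{bubblelattices2}.) Once it is available, substituting $q\mapsto\tfrac{t(q-1)}{1-t}$ and $t\mapsto\tfrac{q}{q-1}$ into $H_{m,n}$ and comparing term by term with the $M$-triangle formula of Theorem~\ref{thm:m-explicit-formula} yields $M_{m,n}(q,t)=(1-t)^{m+n}H_{m,n}\bigl(\tfrac{t(q-1)}{1-t},\tfrac{q}{q-1}\bigr)$, i.e.\ Corollary~\ref{cor:h-to-m}.
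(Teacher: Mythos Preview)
The paper does not prove this statement. Theorem~\ref{thm:h_explicit} is quoted verbatim from McConville and M\"uhle~\cite[Theorem~1.3]{bubblelattices2} and is used only as input to Corollary~\ref{cor:h-to-m}; no argument for it appears anywhere in the present paper. So there is no ``paper's own proof'' to compare your proposal against.

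That said, your sketch is essentially correct and would constitute a valid independent proof. The combinatorial reading you guess at is exactly right, with no exotic boundary correction: from the definitions one checks that $\transd(\u)$ is the number of $YX$-adjacencies in $\u$, and $\indeld(\u)=(m-|\u_{\x}|)+\#\{YY\text{-adjacencies in }\u\}$, because each absent $x_j$ admits a \emph{unique} right-indel insertion (immediately before the smallest-index $x_b$ present with $b>j$, or at the very end if no such $x_b$ exists), and each $YY$-adjacency yields one right indel by deleting its first letter. Your gap decomposition then gives
\[
\sum_{k\ge 0}S_{p,k}(q,t)\,u^{k}=\Bigl(1+\tfrac{qu}{1-qtu}\Bigr)^{p}\Bigl(1+\tfrac{u}{1-qtu}\Bigr),
\]
and the double generating function collapses (via the same $\tfrac{1}{(1-v)^{a+1}}$ identities used in Lemma~\ref{lem:rhs_rational_function}) to
\[
\sum_{m,n\ge 0}x^{m}y^{n}H_{m,n}(q,t)=\frac{1}{(1-x(qt+1))(1-y(qt+1))-qxy},
\]
which is precisely what one obtains by summing the claimed closed form. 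The apparent $m\leftrightarrow n$ asymmetry in the gap argument disappears in this rational function, as it must. The only caveat is that your write-up presents the first step as a conjecture (``I expect\ldots''); once you record the two-line verification above, the proof is complete.
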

From this explicit form and the explicit formula for the characteristic polynomial in Proposition~\ref{prop:shuffle_char}, the authors show the following relationship between the $H$-triangle and $M$-triangle:
\begin{cor}[{\cite[Corollary~5.4]{bubblelattices2}}]\label{cor:char_from_h}
For $m,n\geq 0$, we have
	\begin{displaymath}
		\ch_{m,n}(q) = q^{m+n}H_{m,n}\left(\frac{q-1}{q},\frac{1-2q}{q-1}\right).
	\end{displaymath}
\end{cor}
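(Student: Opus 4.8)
\emph{Proof proposal.} The plan is to verify the identity by direct substitution into the two closed forms already available: Theorem~\ref{thm:h_explicit} expresses $H_{m,n}(q,t)$ explicitly, and Proposition~\ref{prop:shuffle_char} expresses $\ch_{m,n}(q)$ explicitly. Both sides of the claimed equation are rational functions of $q$ (with denominator a power of $q$), so it suffices to check that the two resulting rational functions coincide term by term.

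First I would plug $q \mapsto \tfrac{q-1}{q}$ and $t \mapsto \tfrac{1-2q}{q-1}$ into the formula of Theorem~\ref{thm:h_explicit}. The one computation that matters is the simplification of the base of the exponent $m+n-2a$:
\[
\frac{q-1}{q}\cdot\frac{1-2q}{q-1} + 1 \;=\; \frac{1-2q}{q}+1 \;=\; \frac{1-q}{q},
\]
which gives
\[
H_{m,n}\!\left(\frac{q-1}{q},\frac{1-2q}{q-1}\right)
= \sum_{a=0}^{\min\{m,n\}}\binom{m}{a}\binom{n}{a}\left(\frac{q-1}{q}\right)^{a}\left(\frac{1-q}{q}\right)^{m+n-2a}.
\]

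Next I would multiply by $q^{m+n}$ and gather the powers of $q$ inside each summand: the factor $q^{-a}$ coming from $\bigl(\tfrac{q-1}{q}\bigr)^{a}$, together with the factor $q^{-(m+n-2a)}$ coming from $\bigl(\tfrac{1-q}{q}\bigr)^{m+n-2a}$ and the external $q^{m+n}$, multiply to $q^{a}$. Extending the range of summation to all $a\ge 0$ (which is harmless, since $\binom{m}{a}\binom{n}{a}=0$ for $a>\min\{m,n\}$), this yields
\[
q^{m+n}H_{m,n}\!\left(\frac{q-1}{q},\frac{1-2q}{q-1}\right)
= \sum_{a\ge 0}\binom{m}{a}\binom{n}{a}(q-1)^{a}q^{a}(1-q)^{m+n-2a}.
\]
Finally, rewriting $(q-1)^{a}q^{a} = \bigl(-(1-q)\bigr)^{a}q^{a} = (-q)^{a}(1-q)^{a}$ converts the summand into $\binom{m}{a}\binom{n}{a}(-q)^{a}(1-q)^{m+n-a}$, so the right-hand side is exactly the expression for $\ch_{m,n}(q)$ recorded in Proposition~\ref{prop:shuffle_char}.

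There is no genuine obstacle here: the argument is a short algebraic verification. The only points calling for a little care are (i) checking that substituting rational arguments into the polynomial $H_{m,n}$ and then manipulating the result are valid as identities of rational functions in $q$ (equivalently, that after clearing the common denominator $q^{m+n}$ both sides are polynomials agreeing for all $q$), and (ii) matching the summation range $0\le a\le\min\{m,n\}$ in Theorem~\ref{thm:h_explicit} with the range $a\ge 0$ in Proposition~\ref{prop:shuffle_char}, which is precisely the observation that the binomial product vanishes beyond $\min\{m,n\}$.
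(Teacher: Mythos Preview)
Your proposal is correct and follows exactly the approach the paper indicates: the corollary is quoted from \cite{bubblelattices2} with the remark that it is obtained ``from this explicit form [Theorem~\ref{thm:h_explicit}] and the explicit formula for the characteristic polynomial in Proposition~\ref{prop:shuffle_char},'' which is precisely the direct substitution and term-by-term comparison you carry out. The algebra is clean and there are no gaps.
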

We can then show an analogous relationship between the $H$-triangle and $M$-triangle, resolving Conjecture~5.5 in \cite{bubblelattices2}:
\fromhtom*
Corollary~\ref{cor:h-to-m} follows from comparing the explicit formulas found in Theorem~\ref{thm:m-explicit-formula} and Theorem~\ref{thm:h_explicit}. However, we still are not aware of a conceptual explanation of this relation, nor of Corollary~\ref{cor:char_from_h}, which the author encourages the readers to find. 

\section*{Acknowledgments}

This research was conducted at the Duluth REU at the University of Minnesota Duluth, which is supported by National Science Foundation grant No. DMS-2409861, Jane Street Capital, and personal donations from Ray Sidney and Eric Wepsic. I would like to thank Evan Chen, Colin Defant, Noah Kravitz, Maya Sankar, and Carl Schildkraut for providing guidance during the research process. I would also like to extend special thanks to Mitchell Lee, Katherine Tung, and Noah Kravitz for their crucial feedback throughout the editing process. Finally, I am very grateful to Joe Gallian and Colin Defant for their support and for extending an invitation to participate in the Duluth REU. 

\bibliographystyle{plain}
\bibliography{M-Triangle}

\begin{thebibliography}{1}

\bibitem{DORAN1994118}
William~F Doran.
\newblock Shuffling lattices.
\newblock {\em Journal of Combinatorial Theory, Series A}, 66(1):118--136, 1994.

\bibitem{greene1988posets}
Curtis Greene.
\newblock Posets of shuffles.
\newblock {\em Journal of Combinatorial Theory, Series A}, 47(2):191--206, 1988.

\bibitem{hersh2002two}
Patricia Hersh.
\newblock Two generalizations of posets of shuffles.
\newblock {\em Journal of Combinatorial Theory, Series A}, 97(1):1--26, 2002.

\bibitem{bubblelattices2}
Thomas McConville and Henri M{\"u}hle.
\newblock Bubble lattices {II}: Combinatorics.
\newblock {\em arXiv preprint arXiv:2208.13683}, 2022.

\bibitem{bubblelattices1}
Thomas McConville and Henri Mühle.
\newblock Bubble lattices {I}: Structure.
\newblock {\em Algebra universalis}, 85(1), February 2024.

\bibitem{MUHLE2022103521}
Henri Mühle.
\newblock Hochschild lattices and shuffle lattices.
\newblock {\em European Journal of Combinatorics}, 103:103521, 2022.

\bibitem{simion1999flag}
Rodica Simion and Richard~P Stanley.
\newblock Flag-symmetry of the poset of shuffles and a local action of the symmetric group.
\newblock {\em Discrete mathematics}, 204(1-3):369--396, 1999.

\bibitem{stanley}
Richard~P Stanley.
\newblock Enumerative combinatorics volume 1 second edition.
\newblock {\em Cambridge studies in advanced mathematics}, 2011.

\end{thebibliography}

\pagebreak 

\section*{Appendix}

In this appendix, we provide an alternate proof to Theorem~\ref{thm:m-rational-function} using Lemma~\ref{lem:inner_sum}, which we restate here for convenience:
\theidentity*
\begin{proof}
    To begin, we expand the binomial on the left hand side of the equation and simplify, giving us
    \begin{align*}
        \sum_{a \geq 0} {\lambda_i \choose a} {\eta_i \choose a} t^a (t+1)^{\eta_i - a} &= \sum_{a \geq 0} {\lambda_i \choose a} {\eta_i \choose a} t^a \sum_{b \geq 0} {\eta_i - a \choose b} t^b  \\
        &= \sum_{j \geq 0} \sum_{a \geq 0} {\lambda_i \choose a} {\eta_i \choose \eta_i - a}{\eta_i - a \choose \eta_i - j} t^{j} \\
        &= \sum_{j \geq 0} {\eta_i \choose \eta_i - j} {{\lambda_i + j} \choose j} t^j
    \end{align*}
    via a variable substitution and Vandermonde's identity. So, now we know that 
    \begin{align*}
        \sum_{\substack{\eta_1 + \dots +\eta_{k+1} = m\\ \lambda_1 + \dots + \lambda_{k+1}= n}} \prod_{i = 1}^{k+1} \sum_{a \geq 0} {\lambda_i \choose a} {\eta_i \choose a} t^a (t+1)^{\eta_i - a} = \sum_{\substack{\eta_1 + \dots +\eta_{k+1} = m\\ \lambda_1 + \dots + \lambda_{k+1}= n}} \prod_{i = 1}^{k+1} \sum_{j \geq 0} {{\lambda_i + j} \choose j} {\eta_i \choose \eta_i - j} t^{j}. 
    \end{align*}

Suppose that we have a row of $m$ distinguishable green balls and a row of $n$ distinguishable red balls in a fixed order. We place $k$ dividers to split the row of green balls into sections of size $\eta_1, \ldots, \eta_{k+1},$ and place another $k$ dividers to split the row of red balls into sections of size $\lambda_1, \ldots, \lambda_{k+1}.$ Notice that this placement of dividers induces a pair of compositions $(\eta_1, \ldots, \eta_{k+1})$ and $(\lambda_1, \ldots, \lambda_{k+1})$ of $m$ and $n,$ respectively, into $k+1$ parts. Then, the coefficient of $t^{\ell}$ in the expanded product 
\[\prod_{i = 1}^{k+1} \sum_{j\geq 0} {{\lambda_i + j} \choose j} {\eta_i \choose \eta_i - j} t^{j}\]
is the number of ways to choose $m - \ell$ green balls to put a star sticker on, and $\ell$ balls from the remaining $n+\ell$ balls to put a triangle sticker on, such that within each pair of sections $(\eta_i, \lambda_i),$ the number of red balls with a sticker in $\lambda_i$ is equal to the number of green balls in $\eta_i$ with no sticker. This additional restriction is due to the fact that within each pair of parts $(\eta_i, \lambda_i),$ we put a sticker on a total of $\eta_i$ balls. 

Then, if we sum over all compositions of $m$ and $n$ into $k+1$ parts, 
\begin{equation}\label{eq:3sum} 
        \sum_{\substack{\eta_1 + \dots +\eta_{k+1} = m\\ \lambda_1 + \dots + \lambda_{k+1}= n}} \prod_{i = 1}^{k+1} \sum_{j = 0}^{\eta_i} {{\lambda_i + j} \choose j} {\eta_i \choose \eta_i - j} t^{j},
\end{equation}
the coefficient of $t^{\ell}$ in this sum is the number of ways to 
\begin{enumerate}
    \item place the $k$ dividers in a row of $m$ green balls,
    \item place the $k$ dividers in a row of $n$ red balls,
    \item choose $m-\ell$ green balls to put a star sticker on,
    \item and choose $\ell$ balls to put a triangle sticker on from all the non-stickered balls,
\end{enumerate}
all subject to the restriction that within each pair of parts $(\eta_i, \lambda_i)$ in the compositions $\eta_1 + \dots + \eta_{k+1} = m$ and $\lambda_1 + \dots + \lambda_{k+1} =n$ induced by the dividers, the number of red balls with a sticker is equal to the number of green balls without a sticker. 

Then, suppose that we chose r red balls to put stickers on, for some $r \geq 0.$ Notice that these stickers must all be triangles. Then, $m - r$ of the green balls will have stickers on them, and $\ell - r$ of the green balls will have triangle stickers. For a fixed $r,$ we can rewrite items (1) through (4) in the list above as the number of ways to 
    \begin{enumerate}
        \item place $k$ dividers among $m$ green balls in a line to induce a composition $(\eta_1, \dots, \eta_{k+1})$ of $m$ into $k+1$ parts, 
        \item place $k$ dividers among $n$ red balls in a line to induce a composition $(\lambda_1, \dots, \lambda_{k+1})$ of $n,$
        \item choose $r$ red balls to put triangle stickers on,
        \item choose $m - r$ green balls to put stickers on, 
        \item and choose $m - \ell$ green balls to specifically have star stickers, while the other $\ell - r$ green balls have triangles, 
    \end{enumerate}
    all subject to the restriction that within each pair of parts $(\eta_i, \lambda_i)$ within our compositions, the number of red balls with stickers is equal to the number of green balls without stickers. We find this number and then sum over all $r$ to find the coefficient of $t^{\ell}$ in Equation~\ref{eq:3sum}. 
    
    Notice that the order we do the items above does not matter, as long as all of the restrictions are met. 
    \begin{itemize}
        \item There are ${n+k \choose k}$ ways to place $k$ dividers among our $n$ red balls, splitting them into parts of size $(\lambda_1, \dots, \lambda_{k+1})$ (item 2). 
        \item Then, there are ${n \choose r}$ ways to choose $r$ of these red balls to put a triangle sticker on (item 3). 
        \item We find the number of ways to do items 1 and 4 together. In any valid placement of $k$ dividers among our $m$ green balls inducing the composition $(\eta_1, \dots, \eta_{k+1})$, the number of stickered red balls in each $\lambda_i$ must be equal to the number of non-stickered green balls in the corresponding section $\eta_i.$ This means that the relative order of the $k$ dividers and the $r$ non-stickered green balls is determined by how we already placed the dividers among the red balls. Then, all that remains to do is to determine the positions of the stickered green balls relative to the dividers and non-stickered green balls. There are ${m-r + (r+k) \choose r + k} = {{m+k} \choose r + k}$ ways to arrange the $m-r$ stickered green balls among the $r$ non-stickered green balls and $k$ dividers.
        \item Finally, there are ${m-r \choose m-\ell}$ ways choose $m- \ell$ of the stickered green balls to have a star sticker, and the remaining stickered green balls to have a triangle (item 5). 
    \end{itemize}
    So, in total, for a fixed $r,$ the number of ways to divide and put stickers on our red and green balls is
    \[{n \choose r}{{n + k} \choose k}{{m + k} \choose {r + k}}{{m - r \choose m - \ell}}.\]
    
    If we sum over all $r \geq 0,$ we get that the coefficient of $t^{\ell}$ in Equation~\ref{eq:3sum} is 
    \begin{align*}
        \sum_{r \geq 0}{n \choose r}{{n + k} \choose k}{{m + k} \choose {r + k}}{{m - r \choose m - \ell}}. 
    \end{align*}

    Finally, we show that 
    \[\sum_{r \geq 0} {n \choose r}{{m + k} \choose {r + k}}{{m - r \choose \ell - r}} = {m+k \choose \ell + k}{n+k + \ell \choose \ell}\]
    via another counting argument, again using balls and stickers. Suppose we have a collection of $m+k$ green balls, and a collection of $n$ red balls. Then, for a fixed $a,$ the expression ${n \choose a}{{m + k} \choose {a + k}}{{m - a \choose \ell - a}}$ counts the number of ways to choose $a$ red balls to put a star sticker on, $a+k$ green balls to put a triangle sticker on, and $\ell - a$ green balls from the remaining $m-a$ green balls to put a star sticker on. So, in total, we are putting a sticker on $\ell+k$ green balls, and a star on $\ell$ balls out of the $n$ red balls and $\ell + k$ stickered green balls, where $\ell - a$ of the balls with stars are red. Then, if we sum over all $a,$ we are counting the number of ways to first choose $\ell+k$ green balls to put a triangle sticker on, and then $\ell$ objects to put a star sticker out of the $n$ red balls and the $\ell + k$ green balls with a sticker, which is ${m + k \choose \ell + k}{n+ k + \ell \choose \ell}.$
    Using these simplifications, we have
        \begin{align*}
        \sum_{\substack{\eta_1+ \dots+ \eta_{k+1} = m\\ \lambda_1+ \dots+ \lambda_{k+1} = n}} \prod_{i = 1}^{k+1} \sum_{a \geq 0} {\lambda_i \choose a} {\eta_i \choose a} t^a (t+1)^{\eta_i - a} &= \sum_{\substack{\eta_1+ \dots+ \eta_{k+1} = m\\ \lambda_1+ \dots+ \lambda_{k+1} = n}} \prod_{i = 1}^{k+1} \sum_{j \geq 0} {{\lambda_i + j} \choose j} {\eta_i \choose \eta_i - j} t^{j} \\
        &= \sum_{\ell\geq 0}\sum_{r \geq 0} {n \choose r}{{n + k} \choose k}{{m + k} \choose {r + k}}{{m - r \choose m - \ell}}t^{\ell} \\
        &= {{n + k} \choose k}\sum_{\ell \geq 0}{m+k \choose \ell + k}{n+k + \ell \choose \ell}t^{\ell},
    \end{align*}
    as desired. 
\end{proof}

This identity simplifies our expression for $M_{m, n}(-q, -t),$ which is
\begin{align*}
    M_{m,n}(-q,-t) &= \sum_{j, k \geq 0} (qt)^{j+k} {m \choose j} \sum_{\substack{\eta_1 + \dots + \eta_{k+1} = m-j\\ \lambda_1 + \dots + \lambda_{k+1}= n-k}} \prod_{i = 1}^{k+1} \sum_{a \geq 0} {\lambda_i \choose a} {\eta_i \choose a} t^a (t+1)^{\eta_i + \lambda_i - a} \\
    &= \sum_{j, k \geq 0} (qt)^{j+k} {m \choose j} (t +1)^{n-k} \sum_{\substack{\eta_1 + \dots + \eta_{k+1} = m-j\\ \lambda_1 + \dots + \lambda_{k+1}= n-k}} \prod_{i = 1}^{k+1} \sum_{a \geq 0} {\lambda_i \choose a} {\eta_i \choose a} t^a (t+1)^{\eta_i - a}
\end{align*}
where the second equality lies in the fact that $\prod_{i = 1}^{k+1}(t+1)^{\lambda_i} = (t+1)^{n-k}$ for all compositions $\lambda$ of $n-k.$ 

By applying Lemma~\ref{lem:inner_sum} to our expression for the $M$-triangle, we have that 
\begin{align*}
    M_{m, n}(-q, -t) &= \sum_{j, k \geq 0} {m \choose j} (qt)^{k + j} (t+1)^{n-k} \sum_{\substack{\eta_1+ \dots+ \eta_{k+1} = m-j\\ \lambda_1+ \dots+ \lambda_{k+1} = n-k}} \prod_{i = 1}^{k+1} \sum_{a \geq 0} {\lambda_i \choose a} {\eta_i \choose a} t^a (t+1)^{\eta_i - a} \\
    &= \sum_{j, k \geq 0}{n \choose k} {m \choose j} (qt)^{k+j} (t+1)^{n-k} \sum_{\ell \geq 0}{{m-j+k}\choose {\ell + k}}{{n+ \ell} \choose \ell} t^{\ell}.
\end{align*}

We use this to provide an alternate proof of Lemma~\ref{lem:lhs_rational_function}, which we restate here for convenience.
\lhs*

\begin{proof}
    We know that 
    \begin{align*}
        &\phantom=\phantom. \sum_{m, n\geq 0} M_{m, n}(-q, -t) x^m y^n \\
        &= \sum_{m, n \geq 0} x^m y^n \sum_{j, k \geq 0} {n \choose k} {m \choose j} (-qt)^{k+j} (t+1)^{n-k} \sum_{\ell \geq 0} {{m-j+k}\choose {\ell + k}}{{n+ \ell} \choose \ell} t^{\ell}. 
    \end{align*}
    Then, re-indexing so there are no dependencies between indices, this expression is equal to 
    \begin{align*}
        &\phantom= \sum_{j, k, l, m, n \geq 0} (qtx)^{j}  (qty)^k x^{m + \ell} (y(1+t))^n t^{\ell} {n+k \choose k} {m + \ell +j \choose j} {{m+ \ell +k}\choose {\ell + k}} {{n +k + \ell} \choose \ell},
    \end{align*}
    where the 5 indices are independent and can be freely interchanged. Notice that in this series, there is only one binomial coefficient that involves $j.$ So, we can apply the power series expansion $\frac{1}{(1-x)^{n+1}} = \sum_{k \geq 0} {{n+k}\choose n}x^k$ to eliminate the sum involving $j$:
    \begin{align*}
        &\phantom= \sum_{j, k, \ell, m, n \geq 0} (qtx)^{j}  (qty)^k x^{m + \ell} (y(1+t))^n t^{\ell} {n+k \choose k} {m + \ell +j \choose j} {{m+ \ell +k}\choose {\ell + k}} {{n +k + \ell} \choose \ell} \\
        &= \sum_{k, \ell, m, n \geq 0} (qty)^k x^m (y(1+t))^n (xt)^{\ell} {n+k \choose k} {{m+ \ell +k}\choose {\ell + k}} {{n +k + \ell} \choose \ell} \sum_{j \geq 0} {m + \ell +j \choose j} (qtx)^{j} \\
        &= \sum_{k, \ell, m, n \geq 0} (qty)^k x^m (y(1+t))^n (xt)^{\ell} {n+k \choose k} {{m+ \ell +k}\choose {\ell + k}} {{n +k + \ell} \choose \ell} \frac{1}{(1-qtx)^{m + \ell + 1}} \\
        &= \frac{1}{1-qtx} f(q, t, x, y), 
    \end{align*}
    where $f(q, t, x, y)$ is defined to be
    \[
    \sum_{k, \ell, m, n \geq 0} (qty)^k \left(\frac{xt}{1-qtx}\right)^{\ell} (y(t+1))^n \left(\frac{x}{1-qtx}\right)^m {n+ k \choose k} {n+k + \ell \choose n+k} {{m+ \ell +k}\choose {\ell + k}}. 
    \]
    Then, notice that there is now only one binomial coefficient in our expression for $f$ that involves $m,$ so we can similarly eliminate the sum including $m$: 
    \begin{align*}
        &\phantom= f(q, t, x, y) \\
        &= \sum_{k, \ell, m, n \geq 0} (qty)^k \left(\frac{xt}{1-qtx}\right)^{\ell} (y(t+1))^n \left(\frac{x}{1-qtx}\right)^m {n+ k \choose k} {n+k + \ell \choose n+k} {{m+ \ell +k}\choose {\ell + k}} \\
        &=  \sum_{k, \ell, n \geq 0} (qty)^k \left(\frac{xt}{1-qtx}\right)^{\ell} (y(t+1))^n {n+ k \choose k} {n+k + \ell \choose n+k} \sum_{m \geq 0} {{m+ \ell +k}\choose {\ell + k}} \left(\frac{x}{1-qtx}\right)^m \\
        &= \sum_{k, \ell,n \geq 0} (qty)^k \left(\frac{xt}{1-qtx}\right)^{\ell} (y(t+1))^n {n+ k \choose k} {n+k + \ell \choose n+k} \frac{1}{\left(1-\frac{x}{1-qtx}\right)^{\ell + k + 1}} \\
        &= \frac{1-qtx}{1- qtx - x} \sum_{k, \ell, n \geq 0} \left(\frac{qty(1-qtx)}{1- qtx - x}\right)^k  (y(t+1))^n \left(\frac{xt}{1 - qtx -x}\right)^{\ell} {n+ k \choose k} {n+k + \ell \choose n+k} \\
        &= \frac{1-qtx}{1- qtx - x} g(q, t, x, y),
    \end{align*}
    where $g(q, t, x y)$ is defined to be 
    \[\sum_{k, \ell,n \geq 0} \left(\frac{qty(1-qtx)}{1- qtx - x}\right)^k  (y(t+1))^n \left(\frac{xt}{1 - qtx -x}\right)^{\ell} {n+ k \choose k} {n+k + \ell \choose n+k}.\]
    Then, notice that there is only one binomial coefficient involving $\ell$ in this expression, so we can eliminate the sum including $\ell$ as follows: 
    \begin{align*}
        &\phantom= g(q, t, x, y) \\
        &= \sum_{k, \ell, n \geq 0} \left(\frac{qty(1-qtx)}{1- qtx - x}\right)^k  (y(t+1))^n \left(\frac{xt}{1 - qtx -x}\right)^{\ell} {n+ k \choose k} {n+k + \ell \choose n+k} \\
        &= \sum_{k, n \geq 0} \left(\frac{qty(1-qtx)}{1- qtx - x}\right)^k (y(t+1))^n {n+ k \choose k} \sum_{\ell \geq 0} {n+k + \ell \choose n+k} \left(\frac{xt}{1 - qtx -x}\right)^{\ell} \\
        &= \sum_{k, n \geq 0} \left(\frac{qty(1-qtx)}{1- qtx - x}\right)^k (y(t+1))^n {n+ k \choose k} \frac{1}{\left(1-\frac{xt}{1 - qtx -x}\right)^{n+k+1}} \\
        &= \frac{1-qtx - x}{1 - qtx -x -xt} \sum_{k, n \geq 0} {n+ k \choose k} \left(\frac{y(t+1)(1-qtx - x)}{1 - qtx - x - xt}\right)^{n} \left(\frac{qty(1-qtx)}{1 - qtx - x - xt}\right)^{k} \\
        &= \frac{1-qtx - x}{1 - qtx -x -xt} h(q, t, x, y),
    \end{align*}
    where $h(q, t, x, y)$ is defined to be 
    \[\sum_{k, n \geq 0} {n+ k \choose k} \left(\frac{y(t+1)(1-qtx - x)}{1 - qtx - x - xt}\right)^{n} \left(\frac{qty(1-qtx)}{1 - qtx - x - xt}\right)^{k}.\]
    
    Then, we can similarly eliminate the sum involving $n,$ and also use the rational function form of an infinite geometric series, to get that $h(q, t, x, y)$ has the following rational function representation: 
    
    \begin{align*}
        &\phantom= h(q, t, x, y) \\
        &= \sum_{k, n \geq 0} {n+ k \choose k} \left(\frac{y(t+1)(1-qtx - x)}{1 - qtx - x - xt}\right)^{n} \left(\frac{qty(1-qtx)}{1 - qtx - x - xt}\right)^{k} \\
        &= \sum_{k \geq 0} \left(\frac{qty(1-qtx)}{1 - qtx - x - xt}\right)^{k} \sum_{n \geq 0}{n+ k \choose k} \left(\frac{y(t+1)(1-qtx - x)}{1 - qtx - x - xt}\right)^{n} \\
        &= \sum_{k \geq 0} \left(\frac{qty(1-qtx)}{1 - qtx - x - xt}\right)^{k} \frac{1}{\left(1 - \frac{y(t+1)(1-qtx - x)}{1 - qtx - x - xt}\right)^{k + 1}} \\
        &= \frac{1 - qtx - x - xt}{1 - qtx - x - xt - y(t+1)(1-qtx - x)} \sum_{k \geq 0} \left(\frac{qty(1-qtx)}{1 - qtx - x- xt - y(t+1)(1-qtx-x)}\right)^k \\
        &= \left(\frac{1 - qtx - x - xt}{1 - qtx - x - xt - y(t+1)(1-qtx - x)}\right)\left( \frac{1}{1 - \frac{qty(1-qtx)}{1 - qtx - x- xt - y(t+1)(1-qtx-x)}}\right) \\
        &= \frac{1 - qtx - x - xt}{1 - qtx - x- xt - y(t+1)(1-qtx-x) - qty(1-qtx)}
    \end{align*}
    Therefore, we have 
    \begin{align*}
        &\phantom=\phantom. \sum_{m,n \geq 0} M_{m, n}(-q, -t) x^m y^n \\
        &= \frac{1}{1-qtx} f(q, t, x, y) \\
        &= \left(\frac{1}{1 - qtx}\right)\left( \frac{1-qtx}{1- qtx - x}\right) g(q, t, x, y) \\
        &= \left(\frac{1}{1 - qtx - x}\right)\left( \frac{1-qtx - x}{1 - qtx -x -xt}\right) h(q, t, x, y) \\
        &= \left(\frac{1}{1-qtx - x - xt}\right)\left(\frac{1 - qtx - x - xt}{1 - qtx - x- xt - y(t+1)(1-qtx-x) - qty(1-qtx)}\right) \\
        &= \frac{1}{(1-x(qt - t+1))(1-y(qt - t+ 1)) + t(1-t)(q+1)xy},
    \end{align*}
    as desired. 
\end{proof}

Then, using Lemma~\ref{lem:lhs_rational_function}, we can find the rational function representation of the $M$-triangle, by setting $-q = q$ and $-t = t,$ which gives us 
\begin{equation}\label{eq:m-triangle-rational}
\sum_{m, n \geq 0} M_{m, n}(q, t) x^m y^n = \frac{1}{(1-x(qt - t+1))(1-y(qt - t+ 1)) + t(1-t)(q+1)xy},
\end{equation}
proving Theorem~\ref{thm:m-rational-function} as introduced at the beginning of the paper.

\end{document}